\tikzstyle{vertex}=[auto=left,circle,draw=black,fill=white, inner sep=1.5]
\newtheorem{theorem}{Theorem}
\newtheorem{remark}{Remark}
\newtheorem{lemma}{Lemma}
\newtheorem{definition}{Definition}
\newtheorem{example}{Example}[section] 
\newtheorem{problem}{Problem}
\newtheorem{conjecture}{Conjecture}
\title{RNA Number of Some Parity Signed Generalized Petersen Graphs}
\author{ Deepak Sehrawat\\
Department of Mathematics\\
Indian Institute of Technology Guwahati\\
Guwahati, India - 781039\\
Email: deepakmath55555@iitg.ac.in\\
\\
Bikash Bhattacharjya\\
Department of Mathematics\\
Indian Institute of Technology Guwahati\\
Guwahati, India - 781039\\
Email: b.bikash@iitg.ac.in
}
\date{}
\begin{document}
\maketitle

\vspace{-0.3in}

\vspace*{0.3in}
\noindent
\textbf{Abstract.} A signed graph $\Sigma=(G,\sigma)$ is said to be \textit{parity signed} if there exists a bijection $f : V(G) \rightarrow \{1,2,...,|V(G)|\}$ such that $\sigma(uv)=+$ if and only if $f(u)$ and $f(v)$ are of same parity, where $uv$ is an edge of $G$. The \textbf{rna} number of a graph $G$, denoted $\sigma^{-}(G)$, is the minimum number of negative edges among all possible parity signed graphs over $G$. The \textbf{rna} number is also equal to the minimum cut size that has nearly equal sides.

In this paper, for generalized Petersen graph $P(n,k)$, we prove that $3 \leq \sigma^{-}(P(n,k)) \leq n$ and these bounds are sharp. The exact value of $\sigma^{-}(P(n,k))$ is determined for $k=1,2$. Some famous generalized Petersen graphs namely, Petersen graph $P(5,2)$, D\" urer graph $P(6,2)$, M\" obius-Kantor graph $P(8,3)$, Dodecahedron $P(10,2)$, Desargues graph $P(10,3)$ and Nauru graph $P(12,5)$ are also treated.

We show that the minimum order of a $(4n-1)$-regular graph having \textbf{rna} number one is bounded above by $12n-2$. The sharpness of this upper bound is also shown for $n=1$. We also show that the minimum order of a $(4n+1)$-regular graph having \textbf{rna} number one is $8n+6$. Finally, for any simple connected graph of order $n$, we propose an $O(2^n + n^{\lfloor \frac{n}{2} \rfloor})$ time algorithm for computing its \textbf{rna} number.

\vspace*{0.05in}
\noindent
\textbf{AMS Classification:} Primary: 05C78; Secondary: 05C22, 05C40

\vspace*{0.1in}
\noindent
\textbf{Keywords:} generalized Petersen graph; parity signed labeling; parity signed graph; edge cut 

\section{Introduction}\label{intro}
All graphs and signed graphs considered in this paper are simple, connected and undirected. For all the graph theoretic terms that are used here, we refer the reader to~\cite{Bondy}. 

A signed graph is a graph whose edges are either positive or negative. Harary~\cite{Harary} introduced the concept of signed graphs and since then signed graphs have been considered to be a natural generalization of ordinary graphs. 

Recently, Acharya and Kureethara~\cite{Acharya1} introduced a special type of signed graph called the parity signed graph. In~\cite{Acharya2},  Acharya, Kureethara, and Zaslavsky have shown that parity signed graphs also have sociological aspects. This concept is based on the assignment of integers $\{1,2,...,|V(G)|\}$ to the vertices of a graph $G$. It is equivalent to a partition of the vertex set of a graph into two subsets, $A$ and $B$, such that $||A|-|B|| \leq 1$. In~\cite{Acharya2}, ~the authors characterized some families of parity signed graphs, namely, signed stars, bistars, cycles, paths and complete bipartite graphs. 

The \textbf{rna} number of a graph $G$, denoted $\sigma^{-}(G)$, is the minimum number of negative edges among all the possible parity signed graphs over $G$. It is equal to the size of a minimum cut whose sides are nearly equal. The \textbf{rna} number of some families of graphs such as stars, wheels, paths, cycles and complete graphs have been examined. For details, see~\cite{Acharya1, Acharya2}. 

This paper is organized as follows. In Section~\ref{prelim}, we give necessary definitions and existing results. In Section~\ref{section of GPG and forbidden cuts}, generalized Petersen graphs and their cuts with equal sides are discussed. In Section~\ref{main result}, we consider \textbf{rna} number of generalized Petersen graphs and we show that

\begin{enumerate}
\item[(1)] the \textbf{rna} number for the class of all generalized Petersen graphs $P(n,k)$ lies between 3 and $n$. Sharpness of the upper bound is obtained by showing that the Petersen graph $P(5,2)$ has \textbf{rna} number 5, 

\item[(2)] $\sigma^{-}(P(n,1))=5$ for odd $n \geq 5$, and $\sigma^{-}(P(n,1))=4$ for even $n \geq 4$; and

\item[(3)] $\sigma^{-}(P(n,2))=7$ for odd $n \geq 7$, and $\sigma^{-}(P(n,2))=6$ for even $n \geq 8$.
\end{enumerate}

In Section~\ref{section of rna no of famous GPGs}, we show that the \textbf{rna} number of Petersen graph, D\" urer graph, M\" obius-Kantor graph, Dodecahedron, Desargues graph and Nauru graph are 5, 4, 6, 6, 8 and 8, respectively. In Section~\ref{section of smallest cubic graphs}, we show that the minimum order of a $(4n-1)$-regular graph having \textbf{rna} number one is bounded above by $12n-2$. Also, a unique cubic graph is constructed having \textbf{rna} number one that reach this bound. We also show that the minimum order of a $(4n+1)$-regular graph having \textbf{rna} number one is $8n+6$. In Section~\ref{section of time complexity}, we propose an exponential time algorithm for computing the \textbf{rna} number $\sigma^{-}(G)$ of $G$. In Section~\ref{section of conclusion}, we conclude the paper and propose a conjecture which states that the \textbf{rna} number of a graph can be computed in polynomial time.

\section{Preliminaries}\label{prelim}
A graph $G = (V(G), E(G))$ is an ordered pair, where $V(G)$ and $E(G)$ represent the vertex set and the edge set of $G$, respectively. By $|V(G)|$ and $|E(G)|$, we denote the order and the size of $G$, respectively. An edge joining the vertices $x$ and $y$ is denoted by $xy$. The length of a shortest path joining the vertices $x$ and $y$, denoted $d_{G}(x,y)$, is called the \textit{distance} between $x$ and $y$. The $k$-\textit{th power} of a simple graph $G$ is the graph $G^k$ whose vertex set is $V(G)$, and two distinct vertices are adjacent in $G^k$ if and only if their distance in $G$ is at most $k$.

An edge $e$ of a graph $G$ is said to be a \textit{cut edge} of $G$ if deletion of $e$ results in a disconnected graph. An \textit{edge cut} (or simply a \textit{cut}) of $G$ is a set of edges whose deletion results in a disconnected graph. In $G$, a cut whose edges lie between vertices of $A$ and $A^c$ for some $A \subset V(G)$ is denoted by $[A : A^c]$. The \textit{size} of the cut $[A : A^c]$ is the number of edges in $[A : A^c]$ and is denoted by $|[A : A^c]|$. A cut of odd size is said to be an \textit{odd cut} and of even size is said to be an \textit{even cut}. The numbers $|A|$ and $|A^c|$ are called the sides of the cut $[A : A^c]$. The \textit{edge-connectivity} $\kappa'(G)$ of a graph $G$ is the minimum size of cut. For a connected graph $G$ with minimum degree $\delta$, it is well known that $1 \leq \kappa'(G) \leq \delta$.

A \textit{signed graph} $\Sigma=(G,\sigma)$ consists of a graph $G=(V,E)$ and a sign function $\sigma$ which labels each edge of $G$ by $+~\text{or}~-$ sign. The graph $G$ is called the \textit{underlying graph} of $\Sigma$. An edge is called \textit{positive} if $\sigma(e)=+$, and \textit{negative} otherwise. The set of negative edges of $\Sigma$ is $E^{-}(\Sigma)$ and the set of positive edges is $E^{+}(\Sigma)$. A signed graph $\Sigma$ is said to be \textit{all-positive} if $E^{-}(\Sigma) = \emptyset$ and \textit{all-negative} if $E^{+}(\Sigma) = \emptyset$. A signed graph is \textit{homogeneous} if it is either all-positive or all-negative, and \textit{heterogeneous} otherwise.

Now we give some necessary definitions and results.

\begin{definition}\cite{Acharya2}
\rm{Given a graph $G$ of order $n$ and a bijection $f : V(G) \rightarrow \{1,2,...,n\}$, define $\sigma_{f} : E(G) \rightarrow \{+,-\}$ such that $\sigma_{f}(uv)=+$ if $f(u)$ and $f(v)$ are of same parity and $\sigma_{f}(uv)=-$ if $f(u)$ and $f(v)$ are of different parity, where $uv$ is an edge in $G$. We define $\Sigma_{f}$ to be the signed graph $(G,\sigma_{f})$.} 
\end{definition}

\begin{definition}\cite{Acharya1}
\rm{A signed graph $\Sigma = (G,\sigma)$ is called a \textit{parity signed graph}, if there exists a bijection $f : V(G) \rightarrow \{1,2,...,n\}$ such that $ \sigma = \sigma_{f}$.} 
\end{definition}

A signed graph is said to be \textit{balanced} if every cycle in it has an even number of negative edges. Harary introduced this idea in~\cite{Harary}. In~\cite{Acharya1}, the authors proved that a parity signed cycle is always balanced. Consequently, every parity signed graph is balanced, see~\cite[Theorem 2.1]{Acharya2}.

\begin{definition}\cite{Acharya1}
\rm{The \textbf{rna} number of a graph $G$, denoted $\sigma^{-}(G)$, is the minimum number of negative edges among all possible parity signed graphs over $G$.} 
\end{definition}

Note that finding the minimum number of negative edges among all parity signed graphs over a graph $G$ is equivalent to finding the size of a minimum cut of $G$ with nearly equal sides. More precisely, if $G$ is of even order then it is equivalent to find the size of a minimum cut whose sides are equal and if $G$ is of odd order then it is equivalent to find the size of a minimum cut whose sides differ by exactly one. 

We now mention some known results about the \textbf{rna} number of some graphs, \emph{viz}., paths, cycles, stars, wheels and complete graphs.

\begin{theorem}\cite{Acharya1}
For any path $P_n$ of order $n$, $\sigma^{-}(P_n) = 1$.
\end{theorem}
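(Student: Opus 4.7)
The plan is to prove matching upper and lower bounds of $1$ on $\sigma^{-}(P_n)$ using the cut-size interpretation stated in the excerpt, namely that $\sigma^{-}(G)$ equals the minimum size of a cut $[A:A^{c}]$ with $\bigl||A|-|A^{c}|\bigr|\leq 1$. Throughout the argument I assume $n\geq 2$, since for $n=1$ the path has no edges and the claim is vacuous.

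For the upper bound I will exhibit an explicit parity signed labeling. Write the path as $v_{1}v_{2}\cdots v_{n}$, and let $m=\lceil n/2\rceil$. Define a bijection $f\colon V(P_{n})\to\{1,2,\ldots,n\}$ that sends $\{v_{1},\ldots,v_{m}\}$ onto the odd elements of $\{1,\ldots,n\}$ and $\{v_{m+1},\ldots,v_{n}\}$ onto the even elements (in either case, in any order internal to the block). Inside each of the two blocks every edge joins two vertices of the same parity, so $\sigma_{f}$ is positive on every edge except possibly $v_{m}v_{m+1}$, which is negative. Hence $\sigma^{-}(P_{n})\leq 1$. Equivalently, the partition $A=\{v_{1},\ldots,v_{m}\}$, $A^{c}=\{v_{m+1},\ldots,v_{n}\}$ has nearly equal sides and cut size exactly one.

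For the lower bound, any parity signed labeling of $P_{n}$ uses both parities (since $n\geq 2$), so the corresponding vertex partition into odd-labeled and even-labeled vertices is nontrivial. Because $P_{n}$ is connected, the induced cut has at least one edge, giving $\sigma^{-}(P_{n})\geq 1$. One could equivalently note that $\kappa'(P_{n})=1$, and a minimum cut of size one is realized by a middle edge whose removal splits $P_{n}$ into two subpaths of nearly equal order, so the minimum over cuts with nearly equal sides is already $1$.

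There is no real obstacle here; the only point to be careful about is the parity bookkeeping in the construction, which is handled cleanly by choosing the split point at $m=\lceil n/2\rceil$ so that the number of odd labels in $\{1,\ldots,n\}$ matches $|A|$ for both parities of $n$.
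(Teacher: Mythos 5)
Your proof is correct. The paper itself gives no proof of this statement---it is quoted from \cite{Acharya1} (and restated later as holding for order $n\geq 2$, matching the restriction you rightly impose, since $P_1$ has no edges)---but your argument is exactly the natural one in the paper's own framework: the block labeling $A=\{v_1,\dots,v_{\lceil n/2\rceil}\}$ gives a nearly balanced cut of size one for the upper bound, and connectedness together with the fact that both parities must occur gives the matching lower bound.
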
\label{rna of path}

\begin{theorem}\cite{Acharya1}
For any cycle $C_n$ with $n$ vertices, $\sigma^{-}(C_n) = 2$.
\end{theorem}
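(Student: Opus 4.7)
The plan is to view $\sigma^{-}(C_n)$ through the cut-size characterization stated just before the theorem: $\sigma^{-}(C_n)$ equals the minimum size of an edge cut $[A:A^c]$ in $C_n$ with $\bigl||A|-|A^c|\bigr|\le 1$. I would establish the two matching bounds separately.

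For the lower bound, I would argue that every nonempty edge cut in a cycle has even size. Fix a cyclic traversal of $C_n$ starting at a vertex of $A$; each edge of $[A:A^c]$ corresponds to a switch between $A$ and $A^c$, and since the walk must return to its starting side, the number of such switches is even. Any bijection $f:V(C_n)\to\{1,\dots,n\}$ necessarily uses both parities (as $n\ge 2$), so the induced cut $[A:A^c]$ with $A=f^{-1}(\text{odd})$ has at least one edge, hence at least two by parity. This gives $\sigma^{-}(C_n)\ge 2$.

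For the upper bound, I would construct an explicit parity signed labeling realising exactly two negative edges. Writing $C_n=v_0v_1\cdots v_{n-1}v_0$, define $A=\{v_0,v_1,\dots,v_{\lceil n/2\rceil-1}\}$, so $|A|=\lceil n/2\rceil$ and $|A^c|=\lfloor n/2\rfloor$. Assign the odd numbers in $\{1,\dots,n\}$ bijectively to the vertices of $A$ (in any order) and the even numbers to the vertices of $A^c$. Only the two edges $v_{\lceil n/2\rceil-1}v_{\lceil n/2\rceil}$ and $v_{n-1}v_0$ have endpoints of different parity, so the resulting parity signed graph has exactly two negative edges. Thus $\sigma^{-}(C_n)\le 2$.

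I do not foresee a real obstacle; the only subtlety worth stating carefully is the parity of cut sizes in a cycle, since that is what forces the jump from the trivial bound $\ge 1$ to the tight bound $\ge 2$. Combining the two bounds yields $\sigma^{-}(C_n)=2$, which handles both the even and odd cases uniformly.
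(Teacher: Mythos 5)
Your proof is correct. The paper itself states this theorem only as a cited result from the literature and gives no proof, so there is nothing to compare against directly; but your argument is the natural one and is fully in the spirit of the paper's own methods: the lower bound via the observation that every edge cut of a cycle has even size (and is nonempty since both parity classes are occupied for $n \geq 3$) mirrors the handshaking-style parity arguments the authors use in their Lemmas on forbidden cuts, and the upper bound via a contiguous arc receiving all the odd labels is exactly the kind of explicit labeling construction used throughout Section 4. Both bounds are verified correctly, including the edge cases.
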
\label{rna of cycle}

\begin{theorem}\cite{Acharya1}
For a star $K_{1,n}$ with $n+1$ vertices, $\sigma^{-}(K_{1,n}) = \lceil \frac{n}{2} \rceil $.
\end{theorem}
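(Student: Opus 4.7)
The plan is to invoke the alternative characterization already noted in the paper: $\sigma^{-}(G)$ equals the minimum size of an edge cut of $G$ whose two sides differ in cardinality by at most one. For $K_{1,n}$ every edge is incident to the unique center vertex $v_0$, so for any vertex bipartition $(A, A^c)$ the cut size is precisely the number of leaves lying on the side that does \emph{not} contain $v_0$. This reduces the problem to a one-parameter count: which side gets the center?

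I would then split into two easy cases according to the parity of $n$. If $n$ is odd, then $|V(K_{1,n})| = n+1$ is even and the nearly-equal condition forces $|A| = |A^c| = (n+1)/2$; regardless of where $v_0$ is placed, the opposite side consists of exactly $(n+1)/2$ leaves, giving cut size $(n+1)/2 = \lceil n/2 \rceil$. If $n$ is even, then $|V(K_{1,n})| = n+1$ is odd and the sides must have sizes $n/2$ and $n/2+1$; placing $v_0$ in the larger side (together with $n/2$ leaves) leaves $n/2$ leaves on the other side, yielding cut size $n/2 = \lceil n/2 \rceil$, while placing $v_0$ in the smaller side is strictly worse.

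Combining both cases will prove the two inequalities simultaneously: the labelings described above realize a nearly-equal cut of size $\lceil n/2 \rceil$ (giving $\sigma^{-}(K_{1,n}) \le \lceil n/2 \rceil$), and the case analysis shows no nearly-equal cut can be smaller (giving $\sigma^{-}(K_{1,n}) \ge \lceil n/2 \rceil$). There will be no real obstacle in this argument, since the star's edge structure collapses the minimization to the parity of the single label $f(v_0)$; the main step is simply the cut-equivalence observation followed by the two-case count.
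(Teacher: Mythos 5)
Your proof is correct: the reduction to the nearly-equal-cut characterization (which the paper states explicitly in Section~2) and the observation that every edge of $K_{1,n}$ is incident to the center immediately give the two-case parity count, and both cases check out. The paper itself only cites this result from the literature without reproducing a proof, so there is nothing to diverge from; your argument is exactly the one the paper's framework suggests.
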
\label{rna of star}

A \textit{wheel} $W_n$ is the edge-disjoint union of $C_{n-1}$ and $K_{1,n-1}$. The \textbf{rna} number of a wheel $W_n$ is determined in \cite[Theorem 12]{Acharya2}.

\begin{theorem}\cite{Acharya2}
For a wheel $W_n$, $\sigma^{-}(W_{n}) = \lceil \frac{n+4}{2} \rceil $.
\end{theorem}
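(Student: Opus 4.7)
The plan is to use the cut characterization noted immediately after Definition 4: $\sigma^{-}(W_n)$ equals the minimum size of an edge cut $[A:A^c]$ of $W_n$ satisfying $\bigl||A|-|A^c|\bigr|\le 1$. Write $v_0$ for the hub and $v_1,\dots,v_{n-1}$ for the rim vertices in cyclic order along $C_{n-1}$. Every edge of $W_n$ is either a ``spoke'' (an edge from $v_0$) or a ``rim edge''. By symmetry I may fix $v_0\in A$ and set $a=|A|$; then exactly $n-a$ spokes cross the cut, while the rim edges in the cut number $2r$, where $r$ is the number of maximal arcs of $A\cap(\mathrm{rim})$ on the rim cycle (with the convention $r=0$ when one rim side is empty).

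Consequently, for any fixed $a$ with $1<a<n$, the cut size is $(n-a)+2r\ge (n-a)+2$, with equality realised by taking $A\cap(\mathrm{rim})$ to be a single consecutive arc on $C_{n-1}$. Imposing the near-balance constraint then reduces the problem to a small case analysis on the parity of $n$: if $n$ is even, $a=n/2$ is forced and the single-arc cut has size $n/2+2$; if $n$ is odd, the two admissible values $a\in\{(n-1)/2,(n+1)/2\}$ are compared, together with the mirror-image cases obtained by placing the hub in $A^c$ (noting that swapping $A$ and $A^c$ trades the spoke contribution $n-a$ for $a$, so the optimum puts the hub on the larger side when the sides differ). Bundling the two parities into a single closed form yields $\lceil (n+4)/2\rceil$, and the arc construction produces an explicit parity labeling attaining this many negative edges.

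I do not expect any conceptual obstacle: once the spoke/rim decomposition is in place, both the lower bound and the matching construction are short counting exercises. The only delicate point is the degenerate cases $a\in\{1,n-1\}$, where one rim side is empty; here a spoke-only cut already has size $n-1$, which for $n\ge 4$ exceeds the arc-based optimum, so these cases can be dismissed at once. After that, the proof reduces to writing out the two parity subcases carefully and checking that the ceiling in the stated formula correctly absorbs the parity adjustment.
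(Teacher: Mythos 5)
This theorem is quoted from~\cite{Acharya2}; the paper gives no proof of its own, so your proposal can only be judged on its internal correctness. Your framework (reduce to a nearly balanced cut, split the edges into spokes and rim edges, count $n-a$ crossing spokes and $2r$ crossing rim edges, and realise the minimum with a single consecutive arc) is the right one and is essentially the standard argument. The problem is the last step. Carry out your own odd case: with the hub on the larger side you take $a=(n+1)/2$, so the cut has $(n-a)+2=(n-1)/2+2=(n+3)/2$ edges. But for odd $n$ the stated value is $\lceil\frac{n+4}{2}\rceil=(n+5)/2$, which is larger by one. Your claim that ``bundling the two parities into a single closed form yields $\lceil(n+4)/2\rceil$'' is therefore not what your computation produces; you would in fact derive $\lceil\frac{n+3}{2}\rceil$. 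A concrete check: for the five-vertex wheel (hub plus $C_4$), the split $A=\{\text{hub},v_1,v_2\}$, $A^c=\{v_3,v_4\}$ gives a nearly balanced cut of size $4$, while the formula as printed gives $\lceil 9/2\rceil=5$.

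The source of the mismatch is a convention clash rather than a flaw in your method. The formula $\lceil\frac{n+4}{2}\rceil$ is correct when $W_n$ denotes the wheel with $n$ rim vertices and $n+1$ vertices in total (as in~\cite{Acharya2}): there the spoke count becomes $n-a+1$ and both parities do bundle into $\lceil\frac{n+4}{2}\rceil$. You, however, adopted the definition given in this paper ($C_{n-1}$ plus $K_{1,n-1}$, i.e.\ $n-1$ rim vertices), under which the correct answer is $\lceil\frac{n+3}{2}\rceil$. To repair the proof you must either switch to the $(n+1)$-vertex convention and redo the spoke count, or prove the corrected formula; as written, the final ``bundling'' step asserts an identity that fails for every odd $n$. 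A smaller point: for odd $n$ the two admissible sizes and the two hub placements give spoke counts $\frac{n-1}{2}$ and $\frac{n+1}{2}$, and you should say explicitly that the minimum over these is what enters the bound, rather than appealing loosely to ``mirror-image cases''.
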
\label{rna of wheel}

\begin{theorem}\cite{Acharya1}
For a complete graph $K_n$ with $n \geq 2$ vertices, $\sigma^{-}(K_{n}) = \lceil \frac{n}{2} \rceil \lfloor \frac{n}{2} \rfloor $.
\end{theorem}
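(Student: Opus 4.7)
The plan is to exploit the reformulation of $\sigma^{-}$ as a minimum cut with nearly equal sides, stated in the paragraph after the definition of \textbf{rna} number. For $K_n$, every pair of distinct vertices is joined by an edge, so for any nonempty proper subset $A \subset V(K_n)$ the cut $[A : A^c]$ contains \emph{all} pairs with one endpoint in $A$ and one in $A^c$, giving $|[A : A^c]| = |A|\,|A^c| = |A|(n-|A|)$.

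Next, I would invoke the parity-labeling constraint: any bijection $f : V(K_n) \to \{1, 2, \dots, n\}$ partitions $V(K_n)$ into the set $A$ of odd-labeled vertices and the set $A^c$ of even-labeled vertices, with $|A| = \lceil n/2 \rceil$ and $|A^c| = \lfloor n/2 \rfloor$ (or vice versa). Hence the sides of the induced cut are forced; no minimization over $|A|$ is needed. Substituting into the formula above gives
\[
\sigma^{-}(K_n) \;=\; \left\lceil \tfrac{n}{2} \right\rceil \left\lfloor \tfrac{n}{2} \right\rfloor.
\]

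There is essentially no obstacle here: the proof is a one-line computation once the cut reformulation of $\sigma^{-}$ is in hand. The only thing worth mentioning for completeness is that the value is the same whether we fix the odd labels on $A$ or on $A^c$, and that this coincides with what one would get by writing down any explicit parity labeling and counting edges between the two parity classes directly, which could be offered as an alternative one-line verification for readers who prefer not to quote the min-cut reformulation.
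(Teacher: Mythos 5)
Your argument is correct: in $K_n$ every parity labeling partitions the vertices into $\lceil n/2\rceil$ odd-labeled and $\lfloor n/2\rfloor$ even-labeled vertices, all cross pairs are edges, so every parity signed $K_n$ has exactly $\lceil \frac{n}{2}\rceil\lfloor \frac{n}{2}\rfloor$ negative edges and the minimum is forced. The paper only cites this result from the literature without reproducing a proof, so there is nothing to compare against; your one-line computation via the min-cut reformulation is exactly the argument the paper's own remarks on that reformulation suggest.
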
\label{rna of complete}

\section{Generalized Petersen Graphs and Their Forbidden Cuts}\label{section of GPG and forbidden cuts}

The family of generalized Petersen graphs was introduced by Coxeter~\cite{Coxeter} in 1950 and was given its name by Watkins~\cite{Watkins} in 1969.

\begin{definition}
\rm{For any $n \geq 3$ and $k \geq 1$ with $2k < n$, the \textit{generalized Petersen graph} $P(n,k)$ has vertex set $V(P(n,k))=\{u_{i}, v_{i}~|~i =0,1,...,n-1\}$ and edge set $E(P(n,k))=\{u_{i}u_{i+1}, u_{i}v_{i}, v_{i}v_{i+k}~|~i = 0,1,...,n-1\}$, where subscripts are read modulo $n$.}
\end{definition}

From the definition, it is clear that $P(n,k)$ is a cubic graph and $P(5,2)$ is the well-known Petersen graph. The vertices $u_{0}, u_{1},...,u_{n-1}$ are called $u\text{-}vertices$ and the vertices $v_{0},v_{1},...,v_{n-1}$ are called $v\text{-}vertices$. The edges $u_{i}v_{i}$, for $i \in \{0,1,...,n-1\}$, are called \textit{spokes} and the set of all spokes is denoted by $S_{s}$. We call the vertex $u_i~(v_{i})$ is the \textit{partner} of the vertex $v_i~(u_{i})$, for each $i \in \{0,1,...,n-1\}$.

The cycle induced by all $u$-vertices is called the \textit{outer cycle} of $P(n,k)$ and is denoted by $C_{o}$. The cycle(s) induced by all $v$-vertices is(are) called the \textit{inner cycle(s)} of $P(n,k)$. If $\text{gcd}(n,k) = d$ then the subgraph induced by all $v$-vertices consists of $d$ pairwise disjoint $\frac{n}{d}$-cycles. If $d >1$ then no two vertices among $v_{0}, v_{1},..., v_{d-1}$ can be in the same $\frac{n}{d}$-cycle. For $d =1$, $P(n,k)$ has only one inner cycle, and in this case the inner cycle is denoted by $C_I$. If $d > 1$, then $P(n,k)$ has $d$ inner cycles, and these inner cycles are denoted by $C_1,C_2,...,C_d$ such that $v_i \in V(C_{i+1})$, for $i \in  \{0,1,2,...,d-1\}$. 

\begin{definition}\label{def of induced subgraph}
\rm{For any subset $A$ of vertices of $G$, the \textit{induced subgraph} $G[A]$ is the subgraph of $G$ whose vertex set is $A$ and whose edge set consists of all edges of $G$ having both end points in $A$.}
\end{definition}

\begin{lemma}\label{killing possibility 3}
For any $n\geq 4$ and $k \geq 1$, $P(n,k)$ cannot have a cut of size three with equal sides.
\end{lemma}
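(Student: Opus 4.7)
The plan is a proof by contradiction: suppose $[A:A^c]$ is a cut of $P(n,k)$ of size $3$ with $|A|=|A^c|=n$. The first step is a cubicity/parity reduction: since $P(n,k)$ is $3$-regular, counting edge-endpoints incident to $A$ gives $3n = 2|E(G[A])| + 3$, so $|E(G[A])| = \tfrac{3(n-1)}{2}$, which forces $n$ to be odd. I may therefore assume $n$ is odd.

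Next I classify the cut edges by type: let $a$, $b$, $c$ denote the number of spokes, outer-cycle edges, and inner-cycle edges in the cut, respectively. Every cycle of $G$ meets the cut in an even number of edges (the two sides alternate along the cycle), so $b$ is even, and $c$ is a sum of even quantities over the inner cycles and hence also even. Combined with $a+b+c = 3$, this leaves exactly three configurations: $(a,b,c) \in \{(3,0,0),\,(1,0,2),\,(1,2,0)\}$.

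The two cases with $b = 0$ can be dispatched uniformly. In each of them the outer cycle $C_o$ lies entirely on one side, say $A$, and since $|A| = n$ this forces $A$ to be precisely the set of $u$-vertices, making every spoke a cut edge. Hence $a = n \geq 4$, contradicting $a \leq 3$.

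The remaining configuration $(a,b,c) = (1,2,0)$ is the main obstacle. Set $I_u = \{j : u_j \in A\}$ and $I_v = \{j : v_j \in A\}$. The condition $b=2$ forces $I_u$ to be an arc of consecutive indices modulo $n$; the condition $c=0$ forces $I_v$ to be a union of cosets of $\langle k\rangle \leq \mathbb{Z}/n\mathbb{Z}$ (the vertex-index sets of whole inner cycles); and the unique cut-spoke gives $|I_u \triangle I_v| = 1$. Combined with $|I_u|+|I_v|=|A|=n$, this yields (up to swapping $A$ and $A^c$) $|I_v| = (n-1)/2$. Since each inner cycle has $n/d$ vertices with $d = \gcd(n,k)$, we need $\tfrac{n}{d}$ to divide $\tfrac{n-1}{2}$, i.e.\ $d(n-1) \equiv 0 \pmod{2n}$. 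Reducing modulo $n$ gives $d \equiv 0 \pmod n$, contradicting $1 \leq d \leq k < n$. This completes the proof.
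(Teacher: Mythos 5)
Your proof is correct, and in the crucial case it takes a genuinely different and cleaner route than the paper's. The opening parity step (the handshake count $3n = 2|E(G[A])| + 3$ forcing $n$ to be odd) coincides with the paper's Case 1, and the paper likewise uses the fact that each cycle meets the cut in an even number of edges to pin down the cut's composition. The divergence is in the hard configuration of one spoke plus two outer edges: there the paper splits on $d = \gcd(n,k)$, and for $d \geq 2$ it explicitly locates the inner cycle through an endpoint $v_r$ of the induced path of $u$-indices and argues that the vertex $v_{r+n-k}$ must escape $A$, a somewhat delicate check that even needs a separate remark for the exceptional case $r = j$. You replace all of this with a single divisibility argument: the unique cut spoke forces $|I_v| = (n \pm 1)/2$, while the absence of inner-cycle edges in the cut forces $|I_v|$ to be a multiple of $n/d$, and since $\gcd(n, n-1) = 1$ this would require $n \mid d$, impossible for $d \leq k < n/2$. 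This buys uniformity (the cases $d = 1$ and $d \geq 2$ are not distinguished) and avoids the explicit cycle-tracing. One cosmetic remark: the arc structure of $I_u$ that you extract from $b = 2$ is never used in your final contradiction, so that sentence can be dropped without loss.
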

\begin{proof}
We analyse two cases depending upon whether $n$ is odd or even.

\noindent
\textbf{Case 1.} Let $n=2l$, for some $l \geq 2$. Let there exist a subset $A \subset V(P(2l,k))$ such that $|A| =  2l$ and $|[A : A^{c}]| = 3$. Denote the degree of a vertex $a$ in $P(2l,k)[A]$ by $d_{A}(a)$. We have $$\sum_{a \in A} d_{A}(a) = 3(2l) - 3,~ \text{an odd number}.$$  

This shows that $P(2l,k)[A]$ does not satisfy the handshaking lemma. Hence no such $A$ is possible.\\
\noindent
\textbf{Case 2.} Let $n=2l+1$, for some $l \geq 2$. Let there exist a subset $A \subset V(P(2l+1,k))$ such that $|A| =  2l+1$ and $|[A : A^{c}]| = 3$. If $A$ contains either all $u$-vertices or all $v$-vertices then all the spokes will be in $[A : A^{c}]$. This contradicts the fact that $|[A : A^{c}]| = 3$. Therefore $A$ must contain $u$-vertices as well as $v$-vertices. Consequently $[A : A^{c}]$ contains at least two edges of $C_o$, since $u$-vertices lie in both $A$ and $A^c$.

Now we consider two sub-cases.

\textit{Subcase 2(i).} Let $\gcd(2l+1,k)=1$, so that $P(2l+1,k)$ has exactly one inner cycle $C_I$. The condition that $v$-vertices lie in both $A$ and $A^c$ will insist $[A : A^{c}]$ to contain at least two edges of $C_I$. Thus we have $|[A : A^{c}]| \geq 4$, a contradiction to the fact that $|[A : A^{c}]| = 3$. 

\textit{Subcase 2(ii).} Let $\gcd(2l+1,k)=d \geq 2$ so that $P(2l+1,k)$ has $d$ inner cycles $C_{1},C_{2},...,C_{d}$ each of length $\frac{2l+1}{d}$. Note that each $C_{i}$ lies completely either in $A$ or in $A^c$, otherwise we will get a contradiction on the size of $|[A : A^{c}]|$. Therefore at least one cycle among $C_{1},C_{2},...,C_{d}$ lies in $A$ and at least one lies in $A^c$. Hence both $A$ and $A^c$ contain at least three $u$-vertices. 

It is easy to see that if $|[A : A^{c}]| = 3$ then exactly two edges of $[A : A^{c}]$ must be edges of $C_o$ and the third edge must be a spoke. Let this spoke be $u_{j}v_{j}$, for some $j \in \{0,1,2,...,2l\}$. Without loss of generality, let $u_{j} \in A$ and $v_{j} \in A^c$. As exactly one spoke lies across $A$ and $A^c$, the remaining $u$-vertices of $A$ must have their partners in $A$. Thus the number of $u$-vertices and $v$-vertices in $A$ are $l+1$  and $l$, respectively. As $[A : A^{c}]$ has exactly two edges of $C_o$, there exists a path of length $l$ induced by the $u$-vertices of $A$. Let the end points of this path be $u_{r}$ and $u_{r+l}$, for some $r \in \{0,1,2,...,2l\}$. Consequently, the set of $v$-vertices in $A$ is $\{v_{r},v_{r+1},...,v_{r+l} \} \setminus \{v_j\}$ and the subgraph induced by these $v$-vertices must be edge-disjoint union of some inner cycle(s) of length $\frac{2l+1}{d}$. 

The condition $2k < 2l+1$, together with $\gcd(2l+1,k) = d \geq 2$, implies that $3 \leq k \leq l$. Also there must be an inner cycle $v_{r}v_{r+k}v_{r+2k}...v_{r+(2l+1)-2k}v_{r+(2l+1)-k}v_{r}$ containing the vertex $v_{r}$. Now if $r \leq k$ then $r+(2l+1)-k \geq r+ (l+1)$, since $k \leq l$. If $r > k$ then $r+(2l+1)-k = r-k<r$. Thus in both cases, the vertex $v_{r+(2l+1)-k}$ does not lie in $A$. Therefore at least two edges of the inner cycle containing $v_{r}$ must lie in $[A:A^c]$. This gives $|[A : A^c]| \geq 5$, a contradiction. Note that if $r=j$, then we can consider the inner cycle containing $v_{r+1}$ and get a similar contradiction.

This completes this proof.
\end{proof}

\begin{lemma}\label{killing possibility of odd cuts}
For any even $n \geq 4$, $P(n,k)$ cannot have an odd cut with equal sides.
\end{lemma}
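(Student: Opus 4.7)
The plan is to reuse the handshaking parity argument that already handled the even case in Lemma~\ref{killing possibility 3}. This lemma asks only about the parity of the cut size when the sides are equal, so essentially no structural information about $P(n,k)$ beyond 3-regularity should be required.

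First I would suppose, for contradiction, that there exists $A\subset V(P(n,k))$ with $|A|=|A^c|=n$ and $|[A:A^c]|$ odd. Since $P(n,k)$ is cubic, each vertex of $A$ contributes $3$ to $\sum_{a\in A}\deg_{P(n,k)}(a)$, so this sum equals $3n$. Splitting each such degree into edges inside $A$ (counted twice) and edges crossing into $A^c$ (counted once) gives the identity
\[
3n \;=\; 2\,|E(P(n,k)[A])| \;+\; |[A:A^c]|.
\]
Since $n$ is even, $3n$ is even, and $2|E(P(n,k)[A])|$ is trivially even, so $|[A:A^c]|$ must be even, contradicting the assumption. Hence no odd cut with equal sides exists.

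There is no real obstacle here; the argument is essentially a one-line parity check, identical in spirit to Case~1 of Lemma~\ref{killing possibility 3}. The only thing worth noting in the write-up is that the statement in fact holds for any cubic graph of even order, so the proof need not invoke outer or inner cycles, spokes, or any parameter $k$ at all.
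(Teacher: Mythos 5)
Your proof is correct and uses exactly the same handshaking/parity argument as the paper: the paper writes $\sum_{a\in A} d_A(a) = 3(2l) - (2r+1)$ and notes this is odd, which is precisely your identity $3n = 2|E(P(n,k)[A])| + |[A:A^c]|$ rearranged. Your closing observation that the lemma holds for any cubic graph of even order is accurate, though the paper does not state it in that generality.
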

\begin{proof}
Take $n = 2l$ for some $l \geq 2$. On the contrary, let $P(n,k)$ have an odd cut with equal sides. Thus there exists a subset $A \subset V(P(2l,k))$ such that $|A| = 2l$ and $|[A : A^c]| = 2r+1$, for some $r \geq 1$.

If $d_{A}(a)$ is the degree of vertex $a$ in $P(2l,k)[A]$, then we have $$ \sum_{a \in A} d_{A}(a) = 3(2l) - (2r+1), ~\text{an odd number}.$$
This is a contradiction to the handshaking lemma. Hence no odd cut with equal sides is possible in $P(2l,k)$. This completes the proof.
\end{proof}

\begin{lemma}\label{killing possibility of even cuts}
For any odd $n \geq 5$, $P(n,k)$ cannot have an even cut with equal sides.
\end{lemma}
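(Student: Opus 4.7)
The plan is to mirror the handshaking-lemma argument used in the proof of Lemma \ref{killing possibility of odd cuts}, only with the parities reversed. Since $P(n,k)$ is cubic of order $2n$ and $n$ is assumed odd, a cut with equal sides corresponds to a subset $A \subset V(P(n,k))$ with $|A| = |A^c| = n$, which is an odd number. I would assume, for contradiction, that such an $A$ exists with $|[A:A^c]|$ even, say $|[A:A^c]| = 2r$ for some $r \geq 1$.

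Writing $d_A(a)$ for the degree of a vertex $a \in A$ in the induced subgraph $P(n,k)[A]$, each vertex of $A$ contributes $3 - d_A(a)$ edges to $[A:A^c]$, so summing gives
$$\sum_{a \in A} d_A(a) = 3|A| - |[A:A^c]| = 3n - 2r.$$
Because $n$ is odd, $3n$ is odd, while $2r$ is even, so the right-hand side is odd. This contradicts the handshaking lemma applied to $P(n,k)[A]$, and therefore no such $A$ can exist.

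I do not expect any real obstacle: unlike Lemma \ref{killing possibility 3}, no subcase analysis on $\gcd(n,k)$ is needed, and the argument never invokes the inner or outer cycle structure of $P(n,k)$. Only the $3$-regularity of $P(n,k)$ and the parities of $n$ and $|A|$ are used, so the same parity-of-degree-sum proof that handled the previous lemma transfers verbatim, with the roles of ``odd cut, even $n$'' swapped for ``even cut, odd $n$''.
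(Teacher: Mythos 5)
Your proof is correct and is essentially identical to the paper's: both apply the handshaking lemma to $P(n,k)[A]$ and observe that $3n - 2r$ is odd when $n$ is odd. The only cosmetic difference is that the paper notes $r \geq 2$ (using edge-connectivity three to rule out cuts of size two), but this plays no role in the parity contradiction.
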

\begin{proof}
Take $n = 2l+1$ for some $l \geq 2$. Let, if possible, $P(2l+1,k)$ have an even cut with equal sides. Thus there exists a subset $A \subset V(P(2l+1,k))$ such that $|A| = 2l+1$ and $|[A : A^c]| = 2r$, for some $r \geq 2$. Here $r$ cannot be one, since no edge cut of size less than three is possible due to the edge-connectivity of generalized Petersen graphs.

If $d_{A}(a)$ is the degree of vertex $a$ in $P(2l+1,k)[A]$, then we have $$ \sum_{a \in A} d_{A}(a) = 3(2l+1) - (2r) = \text{an odd number},$$
a contradiction. Hence no even cut with equal sides is possible in $P(2l+1,k)$. This completes the proof.
\end{proof}

\section{Main Results}\label{main result}
A simple but important result is the following. 

\begin{theorem}\label{gen lower bound}
Let $G$ be a graph with edge-connectivity $k$. Then $\sigma^{-}(G) \geq k$.
\end{theorem}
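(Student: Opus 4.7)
The statement is essentially immediate from the characterization of the \textbf{rna} number as a minimum cut size, so my plan is to reduce it to the definition of edge-connectivity in a couple of lines.

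First, I would recall the observation made right after the definition of $\sigma^{-}(G)$: for any parity signed graph $(G,\sigma_f)$ associated with a bijection $f : V(G) \to \{1,2,\dots,n\}$, the negative edges are precisely those joining a vertex with odd label to a vertex with even label. Letting $A = f^{-1}(\{\text{odd labels}\})$ and $A^c = f^{-1}(\{\text{even labels}\})$, we have $|E^-(\Sigma_f)| = |[A : A^c]|$ with $||A|-|A^c||\le 1$. Hence $\sigma^{-}(G)$ equals the minimum size of an edge cut $[A : A^c]$ of $G$ whose sides satisfy $||A|-|A^c||\le 1$.

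Next I would observe that such a minimizing cut is, in particular, an edge cut of $G$, so its size is at least the edge-connectivity $\kappa'(G)=k$ by the definition of $\kappa'(G)$ as the minimum size of any edge cut. Chaining the two inequalities gives $\sigma^{-}(G)\ge k$, which is what we want.

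There is essentially no obstacle here: the only thing to be careful about is to verify that a nearly-equal partition of $V(G)$ into nonempty parts always yields a genuine edge cut (i.e.\ that both sides are nonempty), which holds as soon as $|V(G)|\ge 2$ since $|A|,|A^c|\ge \lfloor n/2\rfloor\ge 1$. Under the standing assumption that $G$ is connected with at least two vertices, the argument is complete.
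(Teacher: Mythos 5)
Your proof is correct and follows essentially the same route as the paper: identify $\sigma^{-}(G)$ with the minimum size of a cut with nearly equal sides, note that any such cut is in particular an edge cut, and conclude from $\kappa'(G)=k$. You simply spell out the labeling-to-cut correspondence and the nonemptiness of both sides, which the paper leaves implicit.
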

\begin{proof}
Clearly, no cut of $G$ with nearly equal sides can have less than $k$ edges since $\kappa'(G) = k$. Hence $\sigma^{-}(G) \geq k$. 
\end{proof} 

\begin{theorem}\label{sharp bound of P(n,k)}
For any $n \geq 3$ and $k \geq 1$, the \textbf{rna} number of $P(n,k)$ satisfies $$3 \leq \sigma^{-}(P(n,k)) \leq n.$$
\end{theorem}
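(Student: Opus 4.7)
My plan is to prove the two bounds independently: the lower bound comes from a general edge-connectivity argument, while the upper bound is witnessed by an explicit bipartition into the outer cycle vertices versus the inner vertices.

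For the lower bound, I would invoke Theorem~\ref{gen lower bound} together with the fact that every generalized Petersen graph $P(n,k)$ is $3$-edge-connected. Since $P(n,k)$ is cubic, its edge-connectivity is at most $3$; the matching lower bound $\kappa'(P(n,k)) \geq 3$ is a classical fact (which can be deduced from the standard $3$-connectivity result for generalized Petersen graphs, since vertex-connectivity is a lower bound for edge-connectivity). Applying Theorem~\ref{gen lower bound} then gives $\sigma^{-}(P(n,k)) \geq \kappa'(P(n,k)) = 3$ immediately.

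For the upper bound, I would exhibit a single explicit cut with (nearly) equal sides. Let $A = \{u_0, u_1, \dots, u_{n-1}\}$ be the set of all $u$-vertices, so that $A^{c} = \{v_0, v_1, \dots, v_{n-1}\}$ is the set of all $v$-vertices. Since $|V(P(n,k))| = 2n$ is even and $|A| = |A^{c}| = n$, the sides of the cut $[A:A^{c}]$ are exactly equal. By the definition of $P(n,k)$, the only edges with one endpoint in $A$ and the other in $A^{c}$ are the spokes $u_i v_i$ for $i = 0,1,\dots,n-1$. Hence $[A:A^{c}] = S_s$ and $|[A:A^{c}]| = n$. Because $\sigma^{-}(P(n,k))$ is the minimum size of such a cut, this yields $\sigma^{-}(P(n,k)) \leq n$.

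Combining the two estimates gives $3 \leq \sigma^{-}(P(n,k)) \leq n$, which is the claim. The only mildly non-trivial step is the appeal to $\kappa'(P(n,k)) = 3$; the rest is a direct consequence of counting the spokes. I would also remark that both bounds are sharp, with the upper bound attained by $P(5,2)$ (the Petersen graph, discussed later in Section~\ref{section of rna no of famous GPGs}) and the lower bound attained by several instances treated in the same section.
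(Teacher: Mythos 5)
Your proposal is correct and follows essentially the same route as the paper: the lower bound via Theorem~\ref{gen lower bound} and the $3$-edge-connectivity of $P(n,k)$, and the upper bound by exhibiting the cut separating the $u$-vertices from the $v$-vertices, whose edge set is exactly the $n$ spokes. The paper merely phrases the upper bound through the explicit labeling $f(u_i)=2i+1$, $f(v_i)=2i+2$, which induces precisely your partition, so the two arguments are the same.
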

\begin{proof}
The lower bound directly follows from Theorem~\ref{gen lower bound} as the edge-connectivity of the generalized Petersen graph is three. 

Define $f : V(P(n,k)) \rightarrow \{1,2,...,2n\}$ such that $f(u_{i}) = 2i+1$ and $f(v_{i}) = 2i+2$, for $0 \leq i \leq n-1$. This labeling $f$ induces the parity signed graph $(P(n,k), \sigma_f)$. It is clear that the number of negative edges in $(P(n,k), \sigma_f)$ is $n$. Hence $ \sigma^{-}(P(n,k)) \leq n$.
\end{proof}

It is proved (in Lemma~\ref{rna of P(3,1)} and Example~\ref{the rna number of P(5,2)}, respectively) that $ \sigma^{-}(P(3,1)) = 3$ and $ \sigma^{-}(P(5,2)) = 5$. Thus bounds of Theorem~\ref{sharp bound of P(n,k)} are sharp.

\begin{theorem}\label{rna number of P(n,k) with gcd(n,k) = 1}
Let $n \geq 5$ and $k \geq 2$ be such that $\gcd(n,k)=1$. Then the \textbf{rna} number of $P(n,k)$ satisfies $$5 \leq \sigma^{-}(P(n,k)) \leq n.$$
 \end{theorem}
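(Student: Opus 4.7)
The upper bound is Theorem~\ref{sharp bound of P(n,k)} verbatim, so I focus on the lower bound. Lemma~\ref{killing possibility 3} already forbids an equal-sided cut of size $3$ in any $P(n,k)$, and the plan is to rule out a size-$4$ cut with equal sides as well. Write $[A:A^{c}]$ for such a hypothetical cut, with $|A|=n$.

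If $n$ is odd, the handshaking argument used in Lemma~\ref{killing possibility of even cuts} (summing degrees inside $A$ in the cubic graph $P(n,k)$) already forces every equal-sided cut to have odd size, so $4$ is impossible and the lower bound of $5$ is immediate.

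The main work is the case $n$ even. I would parameterize the cut by the triple $(p,q,s)$ that counts its edges in the outer cycle $C_{o}$, in the unique inner cycle $C_{I}$ (single because $\gcd(n,k)=1$), and among the spokes, respectively. Since $C_{o}$ and $C_{I}$ are cycles, both $p$ and $q$ are even, and $p+q+s=4$. Whenever $p=0$, the outer cycle is uncut, so all $u$-vertices sit on one side; combined with $|A|=n$ this forces $A$ or $A^{c}$ to coincide with the full $u$-class, putting every spoke into the cut and giving $s=n\geq 6$, a contradiction. The symmetric argument applied to $C_{I}$ rules out $q=0$. The only surviving configuration is therefore $(p,q,s)=(2,2,0)$.

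This last case is the main obstacle. Because $s=0$, each partner pair $\{u_{i},v_{i}\}$ lies entirely on one side, so $A$ is encoded by a set $S\subset\{0,1,\dots,n-1\}$ with $|S|=n/2$. The equation $p=2$ says that $S$ is an arc of $C_{o}$, so after a rotation I may assume $S=\{0,1,\dots,n/2-1\}$. The plan is then to compute $q$ directly: since $2k<n$, the indices $i$ with $i\in S$ and $i+k\notin S$ form the interval $[n/2-k,\,n/2-1]$, while the indices with $i\notin S$ and $i+k\in S$ (read mod $n$) form the interval $[n-k,\,n-1]$, each of size $k$. Hence $q=2k$, which is at least $4$ for $k\geq 2$, contradicting $q=2$. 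This closes the remaining case and yields $\sigma^{-}(P(n,k))\geq 5$.
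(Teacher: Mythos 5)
Your proposal is correct and follows the same overall skeleton as the paper's proof: the odd case is dispatched by the parity (handshaking) obstruction of Lemma~\ref{killing possibility of even cuts}, and the even case is reduced, exactly as in the paper, to the configuration where the hypothetical size-four cut consists of two edges of $C_o$, two edges of the unique inner cycle $C_I$, and no spokes. The only real divergence is in how that last configuration is killed. The paper fixes the two paths induced by the $u$-vertices and the $v$-vertices of $A$ and asserts that, since $k\geq 2$, their index sets cannot coincide, so at least two spokes must cross the cut --- a claim it leaves as ``easy to check.'' You instead use $s=0$ to encode $A$ by an index set $S$ of size $n/2$, use $p=2$ to force $S$ to be an interval, and then count directly that an interval of length $n/2$ has exactly $2k$ boundary edges on $C_I$, contradicting $q=2$ since $k\geq 2$. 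The two arguments exploit the same tension (an arithmetic progression with common difference $k\geq 2$ cannot behave like an interval), but your explicit computation of $q=2k$ replaces the paper's unproved assertion with a verifiable identity, which is a genuine improvement in rigor; the paper's version, in exchange, avoids the (routine) verification that $p$ and $q$ must be even and that $p=0$ or $q=0$ is impossible, which you supply yourself.
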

\begin{proof} The upper bound is given by Theorem~\ref{sharp bound of P(n,k)}. 

For odd $n$, the lower bound simply follows from Lemma~\ref{killing possibility 3} and Lemma~\ref{killing possibility of even cuts}. Let $n$ be an even number. By Lemma~\ref{killing possibility 3}, $\sigma^{-}(P(n,k)) \geq 4$. Now we show that the \textbf{rna} number of $P(n,k)$ can not be four.

By contradiction, let $P(n,k)$ has a cut of size four with equal sides. Then there exists a subset $A \subset V(P(n,k))$ such that $|A|=n$ and $|[A : A^c]|=4$. If $A$ contains either all $u$-vertices or all $v$-vertices then $[A : A^c]$ has precisely $n$ spokes of $P(n,k)$. Hence $|[A : A^c]|=n$, a contradiction to the fact that $|[A : A^c]|=4$ and $n \geq 5$. Therefore, $A$ must contain $u$-vertices as well as $v$-vertices.

Since $\gcd(n,k) = 1$, there is only one inner cycle $C_{I}$ in $P(n,k)$. Note that $[A : A^c]$ has to contain exactly two edges of $C_o$ and two edges of $C_I$ because both $A$ and $A^c$ contain vertices of $C_o$ and $C_I$. Also $A$ contains as many $u$-vertices as $v$-vertices. Otherwise, $[A : A^c]$ will contain at least one spoke and we will get a contradiction on $|[A : A^c]|$. Further, the condition ``$[A : A^c]$ contains exactly two edges of $C_I$'' insists $v$-vertices of $A$ to induce a path of order $\frac{n}{2}$. Let this path be $P: v_{r}v_{r+k}...v_{r+(\frac{n}{2}-1)k}$, for some $r \in \{0,1,2,...,n-1\}$. 

Similarly, all $u$-vertices of $A$ induce a path of order $\frac{n}{2}$ and let this path be $P': u_{j}u_{j+1}...u_{j+(\frac{n}{2}-1)}$, for some $j \in \{0,1,2,...,n-1\}$. As $k \geq 2$, all these $u$-vertices and $v$-vertices of the paths $P$ and $P'$ cannot be partners of each other, for any $r,j \in \{0,1,2,...,n-1\}$. Hence at least two spokes lie across $A$ and $A^c$. Therefore, $|[A : A^c]| \geq 6$, a contradiction. This completes the proof.
 \end{proof} 
 
\begin{remark}\label{remark 1} \rm{From Theorem~\ref{sharp bound of P(n,k)} and Lemma~\ref{killing possibility 3}, it is easy to see that $\sigma^{-}(P(n,k)) \geq 4$, for $n \geq 4$.}

\end{remark} 
 
\begin{theorem}\label{rna number of P(2l,k) with gcd(n,k) = 1}
Let $n \geq 8$ be even and $k \geq 3$ be such that $\gcd(n,k)=1$. Then the \textbf{rna} number of $P(n,k)$ satisfies $$6 \leq \sigma^{-}(P(n,k)) \leq n.$$
\end{theorem}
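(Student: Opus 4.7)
The plan is to derive both bounds by direct appeal to results already established earlier in the excerpt, rather than building a fresh cut-counting argument from scratch. The upper bound $\sigma^{-}(P(n,k)) \leq n$ is inherited verbatim from Theorem~\ref{sharp bound of P(n,k)} via the alternating labeling $f(u_i) = 2i+1$, $f(v_i) = 2i+2$, which yields exactly $n$ negative edges regardless of $k$; no separate construction is needed here.

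For the lower bound $\sigma^{-}(P(n,k)) \geq 6$, I would combine two already-available facts. First, Theorem~\ref{rna number of P(n,k) with gcd(n,k) = 1} already gives $\sigma^{-}(P(n,k)) \geq 5$ under the hypotheses $n \geq 5$, $k \geq 2$, $\gcd(n,k)=1$, all of which are in force. Second, because $n$ is even and $P(n,k)$ has $2n$ vertices, the quantity $\sigma^{-}(P(n,k))$ equals the size of a minimum cut with equal sides; Lemma~\ref{killing possibility of odd cuts} then forces any such cut to have even size. The parity constraint, together with the lower bound of $5$, immediately promotes the bound to $\sigma^{-}(P(n,k)) \geq 6$.

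A final sanity check concerns the hypotheses $n \geq 8$ and $k \geq 3$: these are not extra assumptions that need separate handling but rather the smallest admissible parameters. Indeed, $n$ even together with $\gcd(n,k)=1$ forces $k$ to be odd, and the defining inequality $2k < n$ of $P(n,k)$ excludes smaller values of $n$ when $k \geq 3$. So no special small cases need to be verified by hand.

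I do not anticipate a genuine obstacle: the heavy lifting of excluding cuts of sizes $3$ and $4$ with equal sides has already been done in Lemma~\ref{killing possibility 3} and in the proof of Theorem~\ref{rna number of P(n,k) with gcd(n,k) = 1}. The only new observation is the parity reduction via handshaking, which cleanly upgrades $5$ to $6$; any difficulty would lie in presenting the chain of citations transparently, not in producing new combinatorics.
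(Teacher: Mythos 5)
Your proposal is correct and follows exactly the paper's own argument: the paper proves this theorem in one line by combining Theorem~\ref{rna number of P(n,k) with gcd(n,k) = 1} (which gives $5 \le \sigma^{-}(P(n,k)) \le n$) with Lemma~\ref{killing possibility of odd cuts} (no odd cut with equal sides when $n$ is even), which rules out the value $5$ and yields the bound $6$. Your additional check that $n\ge 8$ and $k\ge 3$ are forced by the hypotheses is a harmless extra observation.
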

\begin{proof}
The result follows from Theorem~\ref{rna number of P(n,k) with gcd(n,k) = 1} and Lemma~\ref{killing possibility of odd cuts}.
\end{proof}
 
\subsection{The \textbf{rna} Number of $P(n,1)$}

\begin{lemma}\label{rna of P(3,1)}
The \textbf{rna} number of $P(3,1)$ is three. 
\end{lemma}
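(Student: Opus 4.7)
The plan is to establish $\sigma^{-}(P(3,1)) = 3$ by sandwiching with matching lower and upper bounds. For the lower bound I would invoke Theorem~\ref{gen lower bound} directly: since every generalized Petersen graph is cubic with edge-connectivity exactly three, any cut, in particular any cut with nearly equal sides, must contain at least three edges, giving $\sigma^{-}(P(3,1)) \geq 3$. Note that Lemma~\ref{killing possibility 3} does not apply here since its hypothesis requires $n \geq 4$, which is why we cannot rule out a cut of size three with equal sides in $P(3,1)$.

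For the matching upper bound, I would exhibit an explicit parity signed labeling realizing three negative edges. Define the bijection $f : V(P(3,1)) \to \{1,2,\ldots,6\}$ by $f(u_i) = 2i+1$ and $f(v_i) = 2i+2$ for $i \in \{0,1,2\}$. Under $f$, all three $u$-vertices receive odd labels and all three $v$-vertices receive even labels, so the only edges joining vertices of opposite parity are precisely the three spokes $u_0v_0$, $u_1v_1$, $u_2v_2$. The outer cycle edges $u_iu_{i+1}$ and the inner cycle edges $v_iv_{i+1}$ all join vertices of the same parity and are positive. Hence $\Sigma_f$ has exactly three negative edges, equivalently, the partition $A = \{u_0,u_1,u_2\}$, $A^c = \{v_0,v_1,v_2\}$ has equal sides and yields the cut $[A:A^c]$ of size three. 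This gives $\sigma^{-}(P(3,1)) \leq 3$.

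There is no real obstacle here; the whole point of the statement is to confirm that the general lower bound of Theorem~\ref{sharp bound of P(n,k)} is attained at the smallest admissible $n$, thereby showing that the bound $\sigma^{-}(P(n,k)) \geq 3$ is sharp. Combining the two inequalities yields $\sigma^{-}(P(3,1)) = 3$, completing the proof.
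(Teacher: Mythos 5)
Your proof is correct and matches the paper's own argument essentially verbatim: the lower bound via Theorem~\ref{gen lower bound} using edge-connectivity three, and the upper bound via the labeling $f(u_i)=2i+1$, $f(v_i)=2i+2$ that makes exactly the three spokes negative. Your added remark that Lemma~\ref{killing possibility 3} requires $n\geq 4$ and so cannot be used here is a correct and useful clarification, but the substance of the proof is the same.
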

\begin{proof}
Since the edge-connectivity of $P(3,1)$ is three, Theorem~\ref{gen lower bound} gives $\sigma^{-}(P(3,1)) \geq 3$. We now label the vertices of $P(3,1)$ through a bijection $f : V(P(3,1)) \rightarrow \{1,2,...,6\}$ such that the number of negative edges in the induced $(P(3,1), \sigma_{f})$ is exactly three.

Define $f : V(P(3,1)) \rightarrow \{1,2,...,6\}$ such that $f(u_i) = 2i+1$ and $f(v_i) = 2i+2$, for $0 \leq i \leq 2$. Let $A=\{u_{0},u_{1},u_{2}\}$ and $B=\{v_{0},v_{1},v_{2}\}$. Clearly all the spokes of $P(3,1)$ are negative and all the edges of the $C_o$ and $C_I$ are positive in $(P(3,1), \sigma_{f})$. Thus $\sigma^{-}(P(3,1)) = 3$.
\end{proof}

In the subsequent discussion, we consider $n \geq 4$.

\begin{theorem}\label{rna number of P(n,1)}
For $n \geq 4$, we have 
$$\sigma^{-}(P(n,1)) = 
\begin{cases} 
4~~ \text{if $n$ is even},\\
5~~ \text{if $n$ is odd}.\\
\end{cases}$$
\end{theorem}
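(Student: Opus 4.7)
The plan is to split by the parity of $n$, derive the lower bound directly from the parity and handshake lemmas of Section~\ref{section of GPG and forbidden cuts}, and exhibit matching parity signed labelings for the upper bound.

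For the lower bound, Remark~\ref{remark 1} already gives $\sigma^{-}(P(n,1)) \geq 4$ for all $n \geq 4$, which handles the even case. When $n$ is odd, Lemma~\ref{killing possibility of even cuts} forbids any even-sized cut with equal sides in $P(n,1)$, so a minimum equal-sided cut must have odd size; since size $3$ is already ruled out by Lemma~\ref{killing possibility 3}, this forces $\sigma^{-}(P(n,1)) \geq 5$.

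For the upper bound when $n$ is even, I would take $A = \{u_0,\ldots,u_{n/2-1}\} \cup \{v_0,\ldots,v_{n/2-1}\}$ and define $f$ to assign odd labels to $A$ and even labels to $A^c$. Because $u_i$ and $v_i$ always lie in the same part, no spoke is cut; the only edges across $[A:A^c]$ are the four cycle edges $u_{n/2-1}u_{n/2}$, $u_{n-1}u_0$, $v_{n/2-1}v_{n/2}$, $v_{n-1}v_0$, giving $\sigma^{-}(P(n,1)) \leq 4$. For $n$ odd I would shift one of the two chunks by a single vertex: take $A = \{u_0,\ldots,u_{(n-1)/2}\} \cup \{v_0,\ldots,v_{(n-3)/2}\}$, which has exactly $n$ vertices, and again assign odd labels to $A$. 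A direct enumeration shows that $[A:A^c]$ consists of the two outer-cycle edges $u_{(n-1)/2}u_{(n+1)/2}$ and $u_{n-1}u_0$, the two inner-cycle edges $v_{(n-3)/2}v_{(n-1)/2}$ and $v_{n-1}v_0$, and the single spoke $u_{(n-1)/2}v_{(n-1)/2}$ (unavoidable because the $u$-chunk and $v$-chunk differ in length by one), for a total of five edges.

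There is no real obstacle in this argument: the forbidden-cut lemmas from the preceding section carry the entire lower bound, and each upper bound is settled by counting cut edges in the prescribed partition. The only point requiring slight care is the asymmetric choice of chunks in the odd case, which is precisely what produces a cut of the correct (odd) parity and size $5$.
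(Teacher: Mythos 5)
Your proposal is correct and follows essentially the same route as the paper: the lower bounds come from Remark~\ref{remark 1} together with Lemma~\ref{killing possibility of even cuts} in the odd case, and your partitions $A$ are (up to relabeling) exactly the sets used in the paper's two explicit labelings, yielding the same four-edge and five-edge cuts. The only cosmetic difference is that the paper writes out the bijection $f$ explicitly rather than just describing the odd/even partition.
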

\begin{proof} We analyse two cases depending upon whether $n$ is even or odd.\\
\noindent
\textit{Case 1.} Let $n=2l$, for some $l \geq 2$. By Remark~\ref{remark 1}, we have $\sigma^{-}(P(2l,1)) \geq 4$. Thus to show that $\sigma^{-}(P(2l,1)) =  4$, we produce an induced parity signed $P(2l,1)$ which contains exactly four negative edges. Let $f : V(P(2l,1)) \rightarrow \{1,2,...,4l\}$ be defined by
$$ f(u_{i}) = 
 \begin{cases}
2i+2 \hspace{0.44in} ~\text{for}~ 0 \leq i \leq l-1,\\
2i-2l+1 \hspace{0.16in} ~\text{for}~ l \leq i \leq 2l-1;
\end{cases}
$$
and 
$$ f(v_{i}) = 
\begin{cases}
2i+2l+2 \hspace{0.16in} ~\text{for}~ 0 \leq i \leq l-1,\\
2i+1 \hspace{0.44in} ~\text{for}~ l \leq i \leq 2l-1.
\end{cases}
$$ 

 Let $A = \{u_{l},u_{l+1},...,u_{2l-1},v_{l},...,v_{2l-1}\}$ and $B=\{u_{0},u_{1},...,u_{l-1},v_{0},...,v_{l-1}\}$ so that $|A| = |B| = 2l$. Hence, every edge of $P(2l,1)$ having both end points either in $A$ or in $B$ is positive in the induced parity signed labeling. Consequently, all edges of $P(2l,1)$ lying between $A$ and $B$ get negative sign in the induced parity signed labeling. Clearly, $[A : B] = \{u_{2l-1}u_{0},u_{l-1}u_{l}, v_{2l-1}v_{0},v_{l-1}v_{l}\}$. That is, there are exactly four edges between vertices of $A$ and $B$. Hence $\sigma^{-}(P(2l,1)) =  4$.
 
\noindent
\textit{Case 2.} Let $n=2l+1$, for some $l \geq 2$. By Remark~\ref{remark 1} and Lemma~\ref{killing possibility of even cuts}, we have $\sigma^{-}(P(2l+1,1)) \geq 5$. Now we produce an induced parity signed graph over $P(2l+1,1)$ which has exactly five negative edges.  

Let $f : V(P(2l+1,1)) \rightarrow \{1,2,...,4l+2\}$ be defined by
 $$\hspace{-0.19in} f(u_{i}) = 
 \begin{cases}
2i+1 \hspace{0.2in} ~\text{for}~ 0 \leq i \leq l,\\
2i-2l \hspace{0.16in} ~\text{for}~ l+1 \leq i \leq 2l;
\end{cases}
$$
and 
$$\hspace{0.15in} f(v_{i}) = 
\begin{cases}
2i+(2l+3) \hspace{0.2in} ~\text{for}~ 0 \leq i \leq l-1,\\
2i+2 \hspace{0.6in} ~\text{for}~ l \leq i \leq 2l.
\end{cases}
$$

 Let $A = \{u_{0},u_{1},...,u_{l},v_{0},...,v_{l-1}\}$ and $B=\{u_{l+1},u_{1+2},...,u_{2l},v_{l},...,v_{2l}\}$ so that $|A| = |B| = 2l+1$. Note that $[A: B] = \{u_{l}u_{l+1},u_{2l}u_{0},u_{l}v_{l},v_{l-1}v_{l},v_{2l}v_{0}\}$. Each edge of $P(2l+1,1)$, except the five edges of $[A: B]$, is positive in the parity signed graph $(P(2l+1,1), \sigma_f)$. Consequently, the number of negative edges in $(P(2l+1,1), \sigma_f)$ is exactly five. Hence, $\sigma^{-}(P(2l+1,1)) = 5$. This completes the proof.
\end{proof}

\subsection{The \textbf{rna} Number of $P(n,2)$}
In this section, our aim is to prove the following two theorems.
\begin{theorem}\label{rna number of P(2l+1,2)}
For any $l \geq 3$, $\sigma^{-}(P(2l+1,2)) = 7$.
\end{theorem}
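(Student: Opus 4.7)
The plan is to prove the theorem in two directions: the lower bound $\sigma^{-}(P(2l+1,2)) \geq 7$ and the matching upper bound via an explicit partition. Since $\gcd(2l+1,2)=1$, Theorem~\ref{rna number of P(n,k) with gcd(n,k) = 1} already gives $\sigma^{-}(P(2l+1,2)) \geq 5$, and Lemma~\ref{killing possibility of even cuts} forces every cut with equal sides to have odd size. So the lower bound reduces to eliminating cuts of size exactly~$5$.

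For the lower bound, I would suppose there is $A \subset V(P(2l+1,2))$ with $|A|=|A^c|=2l+1$ and $|[A:A^c]|=5$. First I would rule out $A$ consisting entirely of $u$-vertices or of $v$-vertices (otherwise all $n \geq 7$ spokes are in the cut). Then both the outer cycle $C_o$ and the unique inner cycle $C_I$ have vertices on each side, so the number of $C_o$-edges in the cut is even and at least $2$, and likewise for $C_I$. Writing $|[A:A^c]| = a + b + c$ with $a$ outer-cycle edges, $b$ inner-cycle edges, and $c$ spokes, the only possibility is $(a,b,c)=(2,2,1)$. From $a=2$, the set $A \cap V(C_o)$ is a single arc $\{u_r,u_{r+1},\dots,u_{r+l}\}$ (WLOG, after rotation, with $l+1$ vertices), and from $c=1$ the unique crossing spoke $u_jv_j$ (say $u_j \in A$, $v_j \notin A$) forces $A \cap V(C_I) = \{v_r,v_{r+1},\dots,v_{r+l}\} \setminus \{v_j\}$.

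The crux is to show that this last set cannot induce an arc of $C_I$, i.e.\ cannot give only $b=2$ inner-cycle cut edges. Here I would write the inner cycle in explicit cyclic order:
\[
C_I \colon v_0,\,v_2,\,v_4,\dots,v_{2l},\,v_1,\,v_3,\dots,v_{2l-1},\,v_0,
\]
which holds because $n=2l+1$ is odd. Splitting $\{v_r,\dots,v_{r+l}\}$ by index parity shows that it occupies two disjoint arcs of $C_I$ (the even-indexed vertices form one arc, the odd-indexed another), separated on both sides by gaps of length $\lceil l/2\rceil \geq 2$ since $l \geq 3$. Removing a single vertex $v_j$ from such a two-arc set can only shorten one of the arcs or split it further, never merge the arcs. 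Hence $\{v_r,\dots,v_{r+l}\}\setminus\{v_j\}$ is not a single arc of $C_I$, so at least $3$ inner-cycle edges lie in the cut, contradicting $b=2$. This establishes $\sigma^{-}(P(2l+1,2)) \geq 7$.

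For the upper bound, I would exhibit the partition
\[
A=\{u_0,u_1,\dots,u_l,\,v_0,v_1,\dots,v_{l-1}\},\qquad A^c = V(P(2l+1,2))\setminus A,
\]
with $|A|=|A^c|=2l+1$, and verify by direct inspection that $[A:A^c]$ consists of exactly seven edges: the two outer-cycle edges $u_lu_{l+1}$ and $u_{2l}u_0$; the four inner-cycle edges $v_{l-2}v_l$, $v_{l-1}v_{l+1}$, $v_{2l-1}v_0$, $v_{2l}v_1$; and the single spoke $u_lv_l$. Realising this by a bijection $f:V(P(2l+1,2)) \to \{1,2,\dots,4l+2\}$ that assigns odd labels to $A$ and even labels to $A^c$ gives a parity signed graph $(P(2l+1,2),\sigma_f)$ with exactly seven negative edges, hence $\sigma^{-}(P(2l+1,2)) \leq 7$. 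Combined with the lower bound, this yields the claimed equality. The main obstacle is the structural assertion that $\{v_r,\dots,v_{r+l}\}\setminus\{v_j\}$ never forms an arc in $C_I$; the rest of the argument is routine cycle/cut counting together with the parity (handshake) lemmas already proved in Section~\ref{section of GPG and forbidden cuts}.
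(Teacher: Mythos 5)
Your proof is correct and follows the same overall skeleton as the paper's: the lower bound is reduced, via Theorem~\ref{rna number of P(n,k) with gcd(n,k) = 1} and Lemma~\ref{killing possibility of even cuts}, to excluding a cut of size five with equal sides; that cut is forced to consist of two outer edges, two inner edges and one spoke; and the upper bound is an explicit seven-edge cut (yours, $A=\{u_0,\dots,u_l,v_0,\dots,v_{l-1}\}$, is a rotation of the one used in the paper). Where you genuinely diverge is in how the $(2,2,1)$ configuration is eliminated. The paper (Lemma~\ref{lemma 2 for rna of P(2l+1,2)}) takes the $v$-vertices of $A$ to form an arc of $C_I$ and the $u$-vertices an arc of $C_o$, and derives the contradiction by asserting that the two arcs cannot be matched up by partners, so a second spoke must cross. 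You instead use the single crossing spoke to pin down $A\cap V(C_I)$ exactly as $\{v_r,\dots,v_{r+l}\}\setminus\{v_j\}$ and then show this set is not an arc of the step-two cycle $C_I$, forcing at least four inner edges to cross. Your version is arguably cleaner, since it replaces the paper's ``it is easy to check'' matching claim with a concrete structural fact about how an interval of consecutive indices sits inside $C_I$ (it splits into one even-indexed arc and one odd-indexed arc). One small point to tighten: for the conclusion that deleting $v_j$ cannot leave a single arc, what you need is that both parity-arcs have at least two vertices (their sizes are $\lceil(l+1)/2\rceil$ and $\lfloor(l+1)/2\rfloor$, both $\geq 2$ for $l\geq 3$); the quantity you actually bound, the gap length, is not the relevant one, and in fact one of the two gaps has length $\lfloor l/2\rfloor=1$ when $l=3$.
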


\begin{theorem}\label{rna number of P(2l,2)}
For any $l \geq 4$, $\sigma^{-}(P(2l,2)) = 6$.
\end{theorem}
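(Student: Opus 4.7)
The plan is to prove $\sigma^{-}(P(2l,2))=6$ for all $l\geq 4$ by establishing a lower bound of $6$ via a case analysis and then exhibiting a bipartition of the vertices with exactly six crossing edges.

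For the lower bound, Remark~\ref{remark 1} gives $\sigma^{-}(P(2l,2))\geq 4$, and Lemma~\ref{killing possibility of odd cuts} forces this number to be even. So it suffices to rule out the existence of $A\subset V(P(2l,2))$ with $|A|=2l$ and $|[A:A^c]|=4$. Because $\gcd(2l,2)=2$, the graph $P(2l,2)$ has two inner cycles $C_1$ (on the even-indexed $v$-vertices) and $C_2$ (on the odd-indexed $v$-vertices), each of length $l$. I would decompose the cut size as $|[A:A^c]|=e_o+e_1+e_2+e_s$, where $e_o,e_1,e_2$ count the edges contributed by $C_o,C_1,C_2$ (each necessarily even) and $e_s$ counts spokes, and then case-split on $e_o\in\{0,2,4\}$. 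The case $e_o=0$ forces all $u$-vertices to one side, hence all $2l\geq 8$ spokes into the cut. The case $e_o=4$ forces the $u$-vertices of $A$ into two arcs of $C_o$ that cannot be mirrored by the $v$-vertices without splitting at least one $C_i$, again exceeding four. The delicate case is $e_o=2$, where the $u$-vertices of $A$ form a single arc of length $a$; the only configuration that fits the budget requires $C_1\subseteq A$ and $C_2\subseteq A^c$ (up to symmetry) with $a=l$, and here a partner-parity count along the arc, analogous to the one in Theorem~\ref{rna number of P(n,k) with gcd(n,k) = 1}, forces $e_s\geq l-1\geq 3$, giving a total of at least $6$. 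The main obstacle is handling this $e_o=2$ case cleanly while ruling out the hybrid subcases in which exactly one of $e_1,e_2$ vanishes; I would dispatch these by noting that $e_s=0$ together with $C_1\subseteq A$ requires every even-indexed $u$-vertex to lie in $A$, which is impossible inside a single $u$-arc of length less than $l+1$.

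For the upper bound I would take $A=\{u_0,u_1,\ldots,u_{l-1}\}\cup\{v_0,v_1,\ldots,v_{l-1}\}$ of size $2l$, realised by the bijection $f:V(P(2l,2))\to\{1,\ldots,4l\}$ assigning the odd values to $A$ and the even values to $A^c$. Every spoke $u_iv_i$ has both endpoints on the same side and is therefore positive. The outer cycle contributes the two crossing edges $u_{l-1}u_l$ and $u_{2l-1}u_0$; the inner cycle $C_1$ contributes two crossing edges at the boundary of the even-indexed portion of $A$; and $C_2$ contributes two crossing edges analogously for the odd-indexed portion. This produces exactly six negative edges, yielding $\sigma^{-}(P(2l,2))\leq 6$ and completing the proof.
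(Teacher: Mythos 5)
Your proposal is correct and follows essentially the same route as the paper: the lower bound is obtained by combining Remark~\ref{remark 1} with Lemma~\ref{killing possibility of odd cuts} and then ruling out a $4$-cut with equal sides by a case analysis on how the cut meets $C_o$, the two inner cycles and the spokes (your split on $e_o\in\{0,2,4\}$ is just a reorganization of the three cases in Lemma~\ref{killing possibility of 4 for even n}), and your upper-bound partition $A=\{u_0,\dots,u_{l-1},v_0,\dots,v_{l-1}\}$ is exactly the one used in the paper, yielding the same six crossing edges. No substantive difference or gap.
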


In light of Lemma~\ref{killing possibility of even cuts}, it is clear for $l \geq 3$ that the \textbf{rna} number of $P(2l+1,2)$ cannot be 4 and 6.

\begin{lemma}\label{lemma 2 for rna of P(2l+1,2)}
For any $l \geq 3$, the \textbf{rna} number of $P(2l+1,2)$ cannot be 5.
\end{lemma}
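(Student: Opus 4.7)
The plan is to argue by contradiction. Assume there exists $A\subset V(P(2l+1,2))$ with $|A|=|A^c|=2l+1$ and $|[A:A^c]|=5$. Split the cut according to whether its edges lie on the outer cycle $C_o$, on the (unique, since $\gcd(2l+1,2)=1$) inner cycle $C_I$, or among the spokes $S_s$, and let $p$, $q$, and $s$ denote the respective sizes. Both $p$ and $q$ are even, because any edge cut meets a cycle in an even number of edges. If all $u$-vertices lay in $A$, then $|A|=2l+1$ would force $A$ to consist exactly of the $u$-vertices, making all spokes cut and $|[A:A^c]|=2l+1\ge 7$, a contradiction; the symmetric arguments for $v$-vertices and for $A^c$ rule out every ``one-sided'' configuration. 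Hence both $A$ and $A^c$ meet the $u$-vertices and the $v$-vertices, so $p\ge 2$ and $q\ge 2$. Combined with $p+q\le 5$ and $p,q$ even, the only possibility is $p=q=2$, $s=1$.

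Next, I would extract the combinatorial structure of the cut. Let $U_A=\{i:u_i\in A\}$ and $V_A=\{i:v_i\in A\}$, viewed as subsets of $\mathbb{Z}/(2l+1)\mathbb{Z}$. Since $p=2$, the set $U_A$ must form a single arc on $C_o$, so it is a block of consecutive indices; similarly $q=2$ forces $V_A$ to be an arc on $C_I$. Because $C_I$ traverses $v_i\to v_{i+2}\to v_{i+4}\to\cdots$ and $\gcd(2l+1,2)=1$, such an arc in $C_I$ corresponds to an arithmetic progression of common difference $2$ modulo $2l+1$. Finally, $s=1$ gives $|U_A \mathbin{\triangle} V_A|=1$; together with $|U_A|+|V_A|=2l+1$ this yields $\{|U_A|,|V_A|\}=\{l,l+1\}$. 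Up to swapping $A$ with $A^c$, I may assume $|U_A|=l+1$ and $|V_A|=l$, so $U_A=\{r,r+1,\dots,r+l\}$ and $V_A=U_A\setminus\{j\}$ for some $r$ and some $j\in U_A$.

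The heart of the proof is then to show that no choice of $j$ makes $U_A\setminus\{j\}$ an arithmetic progression of common difference $2$ modulo $2l+1$. I would do this by a ``largest cyclic gap'' comparison. Any length-$l$ AP of common difference $2$ in $\mathbb{Z}/(2l+1)\mathbb{Z}$ has cyclic gap multiset $\{2^{l-1},3\}$, so its largest gap is exactly $3$. On the other hand, since $U_A$ sits inside an arc of length $l+1$ in a cycle of length $2l+1$, removing one element of $U_A$ leaves a set whose largest cyclic gap is $l+2$ if $j$ is an endpoint of $U_A$ and $l+1$ if $j$ is an interior index of $U_A$; both of these strictly exceed $3$ whenever $l\ge 3$, giving the required contradiction. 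The step I expect to require the most care is the structural reduction in the second paragraph---namely, confirming that $p=2$ really forces $U_A$ to be a single arc and that arcs on $C_I$ are exactly APs of difference $2$; once this bookkeeping is in place, the final gap comparison is a one-line numerical check using only the hypothesis $l\ge 3$.
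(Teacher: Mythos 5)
Your proposal is correct and follows essentially the same route as the paper: both reduce the hypothetical cut to two edges of $C_o$, two edges of $C_I$ and one spoke, deduce that the $u$- and $v$-vertices of $A$ form paths of orders $l+1$ and $l$, and derive a contradiction from the incompatibility of a consecutive block with an arithmetic progression of difference $2$ modulo $2l+1$. The only difference is that your largest-cyclic-gap comparison rigorously supplies the step the paper dismisses as ``easy to check,'' which is a welcome improvement in precision but not a different method.
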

\begin{proof}
We prove that no edge cut of size five with equal sides is possible in $P(2l+1,2)$.

On the contrary, let $P(2l+1,2)$ has a cut of size five with equal sides. Then there exists a subset $A \subset V(P(2l+1,2))$ such that $|A|=2l+1$ and $|[A : A^c]|=5$. If $A$ contains either all $u$-vertices or all $v$-vertices then the set of all spokes of $P(2l+1,2)$ will constitute $[A : A^c]$. This gives $|[A : A^c]| \geq 7$, a contradiction. Therefore, $A$ must contain some $u$-vertices as well as $v$-vertices. 

Clearly, $P(2l+1,2)$ has only one inner cycle $C_I$ induced by $v$-vertices, since $\gcd(2l+1,2) = 1$. Since $A$ contains both $u$-vertices and $v$-vertices, $[A : A^c]$ must consist of two edges of $C_o$, two edges of $C_I$ and one spoke. Let this spoke be $u_{j}v_{j}$, for some $j \in \{0,1,...,2l\}$. Without loss of generality, let $u_{j} \in A$ and $v_{j} \in A^c$. The conditions: (i) $|A|=2l+1$, (ii) $u_{j} \in A$ and (iii) $[A : A^c]$ has exactly one spoke, together imply that the number of $u$-vertices and $v$-vertices in $A$ are $l+1$ and $l$, respectively.

Consequently, there exist two paths $P$ and $P'$ of order $l+1$ and $l$, respectively, induced by $u$-vertices and $v$-vertices of $A$. Let these paths be $P$ and $P'$  be $u_{r}u_{r+1}...u_{r+l-1}u_{r+l}$ and $v_{s}v_{s+2}...v_{s+(2l-2)}$, for some $r,s \in \{0,1,...,2l\}$, and $u_{j}$ be one of the vertices of $P$. 

It is easy to check that all vertices of $P'$ cannot have their partners in the vertices of $P$, for any $r,s \in \{0,1,...,2l\}$. This means at least one $v$-vertex of $A$ has its partner in $A^c$, and consequently two spokes lie in $[A : A^c]$. This gives $|[A : A^c]| \geq 6$, a contradiction to the assumption $|[A : A^c]| = 5$. This establishes the lemma. 
\end{proof}

\noindent
\textbf{Proof of Theorem~\ref{rna number of P(2l+1,2)}.} By Theorem~\ref{rna number of P(n,k) with gcd(n,k) = 1}, Lemma~\ref{killing possibility of even cuts} and Lemma~\ref{lemma 2 for rna of P(2l+1,2)}, we have $\sigma^{-}(P(2l+1,2)) \geq 7$. To complete the proof, we produce a parity signed graph over $P(2l+1,2)$ with exactly seven negative edges. 

Define $f : V(P(2l+1,2)) \rightarrow \{1,2,...,4l+2\}$ by 
 $$\hspace{-0.19in} f(u_{i}) = 
 \begin{cases}
2i+1 \hspace{0.2in} ~\text{for}~ i=0,1,...,l,\\
2i-2l \hspace{0.149in} ~\text{for}~ i=l+1,l+2,...,2l;
\end{cases}
$$
and 
$$\hspace{0.25in} f(v_{i}) = 
\begin{cases}
4l+2 \hspace{0.6in} ~\text{for}~ i=0,\\
2l+(2i+1) \hspace{0.2in} ~\text{for}~ i=1,2,...,l,\\
2i \hspace{0.83in} ~\text{for}~ i=l+1,l+2,...,2l.
\end{cases}
$$

 Let $A$ and $B$ be set of all vertices of $P(2l+1,2)$ labeled with odd and even integers, respectively. Thus $A = \{u_{0},u_{1},...,u_{l},v_{1},...,v_{l}\}$ and $B  = \{u_{l+1},u_{l+2},...,u_{2l},v_{0},v_{l+1},...,v_{2l}\}$ with $|A| = |B| = 2l+1$. Only the edges of $P(2l+1,2)$ between vertices of $A$ and $B$ are negative in $(P(2l+1,2), \sigma_f)$. Note that $[A : B] = \{u_{0}v_{0}, u_{0}u_{2l}, u_{l}u_{l+1}, v_{0}v_{2}, v_{2l}v_{1}, v_{l-1}v_{l+1}, v_{l}v_{l+2}\}$ with $|[A : B]|=7$. Hence the number of negative edges in $(P(2l+1,2), \sigma_f)$ is seven. This proves that $\sigma^{-}(P(2l+1,2)) = 7$.   $\qed$

\begin{lemma}\label{killing possibility of 4 for even n}
For any $l \geq 4$, $P(2l,2)$ cannot have a cut of size four with equal sides.
\end{lemma}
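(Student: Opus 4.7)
The plan is to argue by contradiction: suppose $A \subset V(P(2l,2))$ satisfies $|A| = 2l$ and $|[A : A^c]| = 4$. Since $\gcd(2l,2) = 2$, $P(2l,2)$ has exactly two inner cycles, $C_1$ on the even-indexed $v$-vertices and $C_2$ on the odd-indexed $v$-vertices, each of length $l$. I would classify the four cut edges by type: let $e_o$, $e_I$ and $s$ denote the numbers of cut edges lying on $C_o$, on the inner cycles, and among the spokes, so that $e_o + e_I + s = 4$. If $A$ contained all $u$-vertices (or all $v$-vertices), the cut would be exactly the spoke set, of size $2l \geq 8$, a contradiction; so each side of the cut contains vertices of both types.

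Because both $A$ and $A^c$ meet the outer cycle, the standard parity of a cut across a cycle forces $e_o$ to be a positive even integer, giving $e_o \in \{2,4\}$. Likewise each inner cycle that is split contributes an even, positive number of cut edges, so $e_I \in \{0,2,4\}$, and consequently $s$ is even as well. This leaves only three admissible triples $(e_o, e_I, s) \in \{(4,0,0),\ (2,0,2),\ (2,2,0)\}$. In the first two, $e_I = 0$ forces each inner cycle to lie entirely on one side; since both $A$ and $A^c$ contain $v$-vertices, one of $C_1, C_2$ lies in $A$ and the other in $A^c$, say $C_1 \subseteq A$. For $(e_o, e_I, s) = (4,0,0)$, $s = 0$ forces $u_i \in A \Leftrightarrow v_i \in A$, so the $u$-vertices of $A$ are exactly the even-indexed ones; these alternate around $C_o$ and yield $e_o = 2l \geq 8$, contradicting $e_o = 4$. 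For $(e_o, e_I, s) = (2,0,2)$, the condition $e_o = 2$ forces the $l$ $u$-vertices of $A$ to form a single arc on $C_o$; such an arc has $\lceil l/2 \rceil$ vertices of one parity and $\lfloor l/2 \rfloor$ of the other, so writing $s = l + a_o - a_e$, where $a_e$ and $a_o$ are the numbers of even- and odd-indexed $u$-vertices of $A$, gives $s \in \{l-1,\ l,\ l+1\}$, which cannot equal $2$ for $l \geq 4$.

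The remaining triple $(2,2,0)$ is where the main bookkeeping lives. Here $s = 0$ forces $A = \{u_i, v_i : i \in I\}$ for some $I \subseteq \{0,1,\ldots,2l-1\}$ with $|I| = l$, while $e_o = 2$ forces $I$ to be a block of $l$ consecutive indices modulo $2l$. For $l \geq 4$, any such block meets both parity classes without exhausting either, so both $C_1$ and $C_2$ are split; each then contributes at least two cut edges, giving $e_I \geq 4$, a contradiction. The main obstacle I anticipate is organising the partnership/parity bookkeeping cleanly: in every subcase one must track simultaneously which inner cycle sits on which side of $A$, how the $u$-vertices of $A$ distribute by index parity, and how they fit on $C_o$ as arcs whose lengths are constrained by $e_o$. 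The unifying observation that collapses each subcase is that once $(e_o, e_I, s)$ is fixed, the spoke count $s$ and the arc structure on $C_o$ jointly force a numerical identity in $l$ that fails as soon as $l \geq 4$.
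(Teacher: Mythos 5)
Your proposal is correct and follows essentially the same route as the paper: both reduce to the same three distributions of the four cut edges among the outer cycle, the inner cycles, and the spokes (your parity observation just makes explicit why these are the only admissible triples), and both dispose of the all-outer-cycle case identically. The remaining differences are cosmetic --- in the triples $(2,0,2)$ and $(2,2,0)$ you derive the contradiction from the spoke count $s=l+a_o-a_e$ and from both inner cycles being split, where the paper instead bounds the number of $C_o$-edges and the number of $v$-vertices lacking partners --- but these are equivalent counts of the same forced configurations.
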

\begin{proof}
On the contrary, let if possible, $P(2l,2)$ have a cut of size four with equal sides. Then there exists a subset $A \subset V(P(2l,2))$ such that $|A|=2l$ and $|[A : A^c]| = 4$. Obviously, $A$ must contain some $u$-vertices as well as $v$-vertices. 

Since $\gcd(2l,2)=2$, $P(2l,2)$ has two inner cycles, $C_1 = v_{0}v_{2}v_{4}...v_{2l-2}v_{0}$ and $C_2 =v_{1}v_{3}v_{5}...v_{2l-1}v_{1}$. Note that $[A : A^c]$ contains at least two edges of $C_o$, since $u$-vertices lie in both $A$ and $A^c$. Thus the followings are the only possibilities for the edges of $[A : A^c]$.
\begin{enumerate}
\item[1.] All four edges of $[A : A^c]$ are edges of $C_o$.
\item[2.] Two edges of $[A : A^c]$ are edges of $C_o$ and remaining two are spokes.
\item[3.] Two edges of $[A : A^c]$ are edges of $C_o$ and remaining two edges are of one of the $C_1$ and $C_2$.
\end{enumerate}

We discuss the above possibilities one by one. 

\noindent
\textit{Case 1.} Let $[A : A^c]$ have four edges of $C_o$. Thus all the vertices of one of the $C_1$ and $C_2$ lie in $A$ and other's vertices lie in $A^c$. Without loss of generality, assume that all the vertices of $C_1$ are contained in $A$. Since no spoke is lying across $A$ and $A^c$, we have $A = \{u_{0},u_{2},u_{4},...,u_{2l-2},v_{0},
v_{2},v_{4},...,v_{2l-2}\}$. Consequently, $A^c = \{u_{1},u_{3},...,u_{2l-1},v_{1},v_{3},
...,v_{2l-1}\}$. Clearly, $[A : A^c]$ contains all the edges of $C_o$. That is, $|[A : A^c]|  = 2l \geq 8$, a contradiction.

\noindent
\textit{Case 2.} Let $[A : A^c]$ have two edges of $C_o$ and two spokes. Since $[A : A^c]$ contains no edge of inner cycles, without loss of generality, assume that $A$ contains all the vertices of $C_1$. That is, $\{v_{0},v_{2},...,v_{2l-2}\} \subseteq A$. 

Consequently, the set of $u$-vertices in $A$ is $ \{u_r\} \cup \{u_{0},u_{2},...,u_{2l-2}\} \setminus \{u_{j}\}$ for some $r \in \{1,3,...,2l-1\}$ and $j \in \{0,2,...,2l-2\}$ because $[A : A^c]$ has exactly two spokes. Equivalently, the set of $u$-vertices in $A$ is $ \{u_r\} \cup \{u_{j+2},u_{j+4},...,u_{j+(2l-2)}\}$ for some $r \in \{1,3,...,2l-1\}$ and $j \in \{0,2,...,2l-2\}$. As $l \geq 4$, it is easy to check that for any $r \in \{1,3,...,2l-1\}$ and for any $j \in \{0,2,...,2l-2\}$, $[A : A^c]$ will contain at least four edges of $C_o$, a contradiction.

\noindent
\textit{Case 3.} Let $[A : A^c]$ have two edges of $C_o$ and two edges of one of the inner cycles. Without loss of generality, assume that two edges of $C_1$ lie in $[A : A^c]$. Thus both $A$ and $A^c$ contain at least one vertex of $C_1$. Clearly, all the vertices of $C_2$ lie in either $A$ or in $A^c$. Hence either $A$ or  $A^c$ contains at least $l+1$ $v$-vertices. If $A$ contains at least $l+1$ $v$-vertices then the number of $u$-vertices in $A$ will be at most $l-1$. This observation shows that at most $l-1$ $u$-vertices of $A$ can have their partners in $A$. Hence at least two spokes will lie across $A$ and $A^c$, contradicting our assumption that $[A : A^c]$ contains no spokes. These contradictions establish the lemma.
\end{proof}

\textbf{Proof of Theorem~\ref{rna number of P(2l,2)}.}
By Lemma~\ref{killing possibility of odd cuts} and Lemma~\ref{killing possibility of 4 for even n}, we have $\sigma^{-}(P(2l,2)) \geq 6$. Let us define $f : V(P(2l,2)) \rightarrow \{1,2,3,...,4l\}$ by 
$$\hspace{0.19in} f(u_{i}) = 
 \begin{cases}
2i+1 \hspace{0.55in} ~\text{for}~ i=0,1,...,l-1,\\
2i-(2l-2) \hspace{0.149in} ~\text{for}~ i=l,l+1,...,2l-1;
\end{cases}
$$
and 
$$\hspace{0.35in} f(v_{i}) = 
\begin{cases}
2l+(2i+1) \hspace{0.2in} ~\text{for}~ i=0,1,...,l-1,\\
2i+2 \hspace{0.6in} ~\text{for}~ i=l,l+1,...,2l-1.
\end{cases}
$$
Let $A = \{u_{0},u_{1},...,u_{l-1},v_{0},v_{1},...,v_{l-1}\}$ and $B  = \{u_{l},u_{l+1},...,u_{2l-1},v_{l},v_{l+1},...,v_{2l-1}\}$ so that $|A| = |B| = 2l$. Consequently, all edges of $P(2l,2)$ between the vertices of $A$ and $B$ are negative in $(P(2l,2), \sigma_f)$. Note that $[A : B] = \{u_{0}u_{2l-1}, u_{l-1}u_{l}, v_{2l-2}v_{0}, v_{2l-1}v_{1}, v_{l-2}v_{l}, v_{l-1}v_{l+1}\}$ with $|[A : B]|=6$. Hence the number of negative edges in $(P(2l,2), \sigma_f)$ is six. This proves that $\sigma^{-}(P(2l,2)) = 6$.
$\qed$

\section{Some Famous Generalized Petersen Graphs}\label{section of rna no of famous GPGs}

In this section, we compute the \textbf{rna} number of some well known generalized Petersen graphs such as Petersen graph, D\" urer graph, M\" obius-Kantor graph, Dodecahedron, Desargues graph and Nauru graph.

\begin{example}\label{the rna number of P(5,2)}
\rm{The generalized Petersen graph $P(5,2)$ is well known as the \textit{Petersen} graph. Since $\gcd(5,2)=1$, Theorem~\ref{rna number of P(n,k) with gcd(n,k) = 1} gives $\sigma^{-}( P(5,2)) \geq 5$. Label the vertices of $P(5,2)$ via $f : V(P(5,2)) \rightarrow \{1,2,3,...,10\}$ defined by $f(u_{i}) = 2i+1$ and $f(v_{i}) = 2i+2$, for $0 \leq i \leq 4$. Thus all the spokes of $P(5,2)$ are negative while other edges of $P(5,2)$ are positive in $(P(5,2), \sigma_{f})$. Hence the \textbf{rna} number of Petersen graph is five. That is, $\sigma^{-}(P(5,2) = 5$. }
\end{example}

\begin{figure}[h]
\begin{center}
\begin{tikzpicture}[scale=0.35]

 \begin{scope}[rotate=90]

		\node[vertex] (v1) at ({360/6*0 }:7cm) {};
		\node[vertex] (v6) at ({360/6*1 }:7cm) {};
		\node[vertex] (v5) at ({360/6 *2 }:7cm) {};
		\node[vertex] (v4) at ({360/6 *3 }:7cm) {};
		\node[vertex] (v3) at ({360/6 *4 }:7cm) {};
		\node[vertex] (v2) at ({360/6 *5}:7cm) {};
        \draw [dashed] ({360/6 *0 +1.5}:7cm) arc  ({360/6*0+1.5}:{360/6*1 -1.5 }:7cm);
        \draw ({360/6 *1 +1.5}:7cm) arc  ({360/6*1+1.5}:{360/6*2 -1.5 }:7cm);
        \draw  ({360/6 *2 +1.5}:7cm) arc  ({360/6*2+1.5}:{360/6*3 -1.5 }:7cm);
        \draw [dashed] ({360/6 *3 +1.5}:7cm) arc  ({360/6*3+1.5}:{360/6*4 -1.5 }:7cm);
        \draw  ({360/6 *4 +1.5}:7cm) arc  ({360/6*4+1.5}:{360/6*5 -1.5 }:7cm);
        \draw  ({360/6*5 +1.5}:7cm) arc  ({360/6*5+1.5}:{360/6*6 -1.5 }:7cm);
        
        \node[yshift=0.32cm] at ({360/6*0 }:7cm) {1};
		\node[xshift=-0.3cm, yshift=0.1cm] at ({360/6*1 }:7cm) {$6$};
		\node[xshift=-0.25cm, yshift=-0.18cm] at ({360/6 *2 }:7cm) {4};
		\node[yshift=-0.3cm] at ({360/6 *3 }:7cm) {2};
		\node[ xshift=0.3cm, yshift = -0.15cm] at ({360/6 *4 }:7cm) {5};
		\node[xshift=0.25cm, yshift=0.08cm] at ({360/6 *5}:7cm) {3};

  \node[vertex] (v7) at ({360/6*0 }:5cm) {};
  \node[vertex] (v12) at ({360/6*1 }:5cm) {};
  \node[vertex] (v11) at ({360/6*2 }:5cm) {};
  \node[vertex] (v10) at ({360/6*3 }:5cm) {};
  \node[vertex] (v9) at ({360/6*4 }:5cm) {};
  \node[vertex] (v8) at ({360/6*5 }:5cm) {};
  
    \draw ({360/6 *0 +1.5}:5cm) ;
    \draw ({360/6 *1 +1.5}:5cm) ;
    \draw ({360/6 *2 +1.5}:5cm) ;
    \draw ({360/6 *3 +1.5}:5cm) ;
    \draw ({360/6 *4 +1.5}:5cm) ;
    \draw ({360/6 *5 +1.5}:5cm) ;
    
   \node[xshift=0.3cm] at ({360/6 *0}:5cm) {7};
   \node[xshift=-0.1cm, yshift = -0.35cm] at ({360/6 *1}:5cm) {12};
   \node[ yshift=-0.3cm] at ({360/6 *2}:5cm) {11};
   \node[xshift=0.35cm, yshift=-0.05cm] at ({360/6 *3}:5cm) {10};
   \node[ yshift=-0.3cm] at ({360/6 *4}:5cm) {9};
   \node[xshift=0.05cm, yshift=-0.35cm] at ({360/6 *5}:5cm) {8};

 \foreach \from/\to in {v4/v10,v3/v9,v1/v7,v6/v12,v7/v9,v8/v10,v9/v11,v10/v12,v11/v7,v12/v8} \draw (\from) -- (\to);
 
 \foreach \from/\to in {v2/v8,v5/v11} \draw [dashed] (\from) -- (\to);

 \end{scope}
 
  \end{tikzpicture}
\caption{A parity signed D\" urer graph with four negative edges. Solid lines denote positive edges and dashed lines denote negative edges.}\label{P_6,2}
\end{center}
\end{figure}
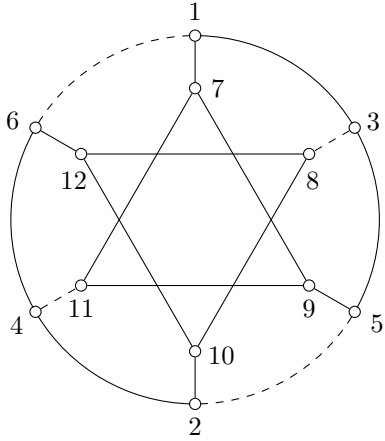

\begin{example}\label{the rna number of P(6,2)} \rm{ The generalized Petersen graph $P(6,2)$ is known as the D\" urer graph. It is depicted in Figure~\ref{P_6,2}.
By Remark~\ref{remark 1}, we have $\sigma^{-}(P(6,2)) \geq 4$. Let $f : V(P(6,2)) \rightarrow \{1,2,...,12\}$ be defined by $ f(v_{i}) = i+7, ~\text{for}~ i = 0,1,2,3,4,5$ and
$$\hspace{-0.19in} f(u_{i}) = 
 \begin{cases}
2i+1 \hspace{0.2in} ~\text{for}~ i=0,1,2,\\
2i- 4 \hspace{0.2in} ~\text{for}~ i=3,4,5.
\end{cases}
$$
This vertex labeling of $P(6,2)$ is described in Figure~\ref{P_6,2}. Clearly $(P(6,2), \sigma_{f})$ have four negative edges as shown in Figure~\ref{P_6,2}. Hence the \textbf{rna} number of D\" urer graph is four.}
\end{example}

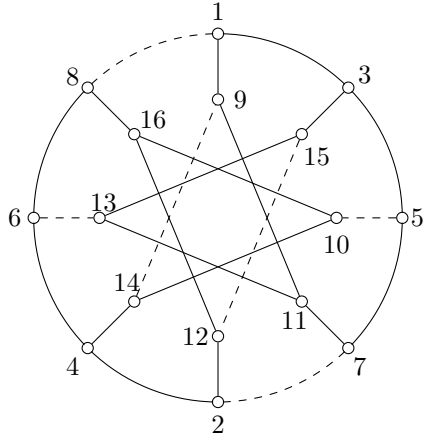
\begin{figure}[h]
\begin{center}
\begin{tikzpicture}[scale=0.35]
 
 \begin{scope}[rotate=90]

		\node[vertex] (v1) at ({360/8*0 }:7cm) {};
		\node[vertex] (v8) at ({360/8*1 }:7cm) {};
		\node[vertex] (v7) at ({360/8 *2 }:7cm) {};
		\node[vertex] (v6) at ({360/8 *3 }:7cm) {};
		\node[vertex] (v5) at ({360/8 *4 }:7cm) {};
		\node[vertex] (v4) at ({360/8 *5}:7cm) {};
		\node[vertex] (v3) at ({360/8 *6 }:7cm) {};
		\node[vertex] (v2) at ({360/8 *7}:7cm) {};
        \draw [dashed] ({360/8 *0 +1.5}:7cm) arc  ({360/8*0+1.5}:{360/8*1 -1.5 }:7cm);
        \draw ({360/8 *1 +1.5}:7cm) arc  ({360/8*1+1.5}:{360/8*2 -1.5 }:7cm);
        \draw ({360/8 *2 +1.5}:7cm) arc  ({360/8*2+1.5}:{360/8*3 -1.5 }:7cm);
        \draw ({360/8 *3 +1.5}:7cm) arc  ({360/8*3+1.5}:{360/8*4 -1.5 }:7cm);
        \draw [dashed] ({360/8 *4 +1.5}:7cm) arc  ({360/8*4+1.5}:{360/8*5 -1.5 }:7cm);
        \draw ({360/8*5 +1.5}:7cm) arc  ({360/8*5+1.5}:{360/8*6 -1.5 }:7cm);
        \draw ({360/8 *6 +1.5}:7cm) arc  ({360/8*6+1.5}:{360/8*7 -1.5 }:7cm);
        \draw ({360/8*7 +1.5}:7cm) arc  ({360/8*7+1.5}:{360/8*8 -1.5 }:7cm);
        
        \node[ yshift=0.3cm] at ({360/8*0 }:7cm) {1};
		\node[xshift=-0.2cm, yshift=0.15cm,] at ({360/8*1 }:7cm) {8};
		\node[xshift=-0.25cm] at ({360/8 *2 }:7cm) {6};
		\node[xshift=-0.2cm,yshift=-0.25cm] at ({360/8 *3 }:7cm) {4};
		\node[ yshift=-0.3cm] at ({360/8 *4 }:7cm) {2};
		\node[xshift=0.15cm, yshift=-0.25cm] at ({360/8 *5}:7cm) {7};
		\node[xshift=0.2cm] at ({360/8 *6 }:7cm) {5};
		\node[xshift=0.22cm, yshift=0.18cm] at ({360/8 *7}:7cm) {3};

  \node[vertex] (v9) at ({360/8*0 }:4.5cm) {};
  \node[vertex] (v16) at ({360/8*1 }:4.5cm) {};
  \node[vertex] (v15) at ({360/8*2 }:4.5cm) {};
  \node[vertex] (v14) at ({360/8*3 }:4.5cm) {};
  \node[vertex] (v13) at ({360/8*4 }:4.5cm) {};
  \node[vertex] (v12) at ({360/8*5 }:4.5cm) {};
  \node[vertex] (v11) at ({360/8*6 }:4.5cm) {};
  \node[vertex] (v10) at ({360/8*7 }:4.5cm) {};
  
    \draw ({360/8 *0 +1.5}:4.5cm) ;
    \draw ({360/8 *1 +1.5}:4.5cm) ;
    \draw ({360/8 *2 +1.5}:4.5cm) ;
    \draw ({360/8 *3 +1.5}:4.5cm) ;
    \draw ({360/8 *4 +1.5}:4.5cm) ;
    \draw ({360/8 *5 +1.5}:4.5cm) ;
    \draw ({360/8 *6 +1.5}:4.5cm) ;
    \draw ({360/8 *7 +1.5}:4.5cm) ;

   \node[xshift=0.3cm] at ({360/8 *0}:4.5cm) {9};
   \node[xshift=0.25cm, yshift=0.2cm] at ({360/8 *1}:4.5cm) {16};
   \node[xshift=0.05cm, yshift=0.2cm] at ({360/8 *2}:4.5cm) {13};
   \node[xshift=-0.1cm, yshift=0.25cm] at ({360/8 *3}:4.5cm) {14};
   \node[xshift=-0.3cm] at ({360/8 *4}:4.5cm) {12};
   \node[xshift=-0.1cm, yshift=-0.25cm] at ({360/8 *5}:4.5cm) {11};
   \node[ yshift=-0.35cm] at ({360/8 *6}:4.5cm) {10};
   \node[xshift=0.2cm, yshift=-0.3cm] at ({360/8 *7}:4.5cm) {15};
  
 \foreach \from/\to in {v4/v12,v2/v10,v1/v9,v8/v16,v6/v14,v5/v13,v15/v10,v16/v11,v13/v16,v11/v14,v12/v15,v9/v12} \draw (\from) -- (\to);
   
 \foreach \from/\to in {v3/v11,v7/v15,v14/v9, v10/v13} \draw [dashed] (\from) -- (\to);
  
\end{scope}  
  
  \end{tikzpicture}
\caption{Parity signed M\" obius-Kantor graph with six negative edges}\label{P_8,3}
\end{center}
\end{figure}

\begin{example}\label{the rna number of P(8,3)} \rm{The generalized Petersen graph $P(8,3)$ is known as the \textit{M\" obius-Kantor} graph. A  parity signed M\" obius-Kantor is depicted in Figure~\ref{P_8,3}. By Theorem~\ref{rna number of P(2l,k) with gcd(n,k) = 1}, we have $\sigma^{-}(P(8,3)) \geq 6$. Let $f : V(P(8,3)) \rightarrow \{1,2,...,16\}$ be defined by $ f(v_{0}) = 9,~f(v_{1}) = 15,~f(v_{2}) = 10,~f(v_{3}) = 11,~f(v_{4}) = 12,~f(v_{5}) = 14,~f(v_{6}) = 13,~f(v_{7}) = 16$ and
$$\hspace{-0.19in} f(u_{i}) = 
 \begin{cases}
2i+1 \hspace{0.2in} ~\text{for}~ i=0,1,2,3,\\
2i- 6 \hspace{0.2in} ~\text{for}~ i=4,5,6,7.
\end{cases}
$$
The vertex labeling $f$ is shown in Figure~\ref{P_8,3}. We see that $(P(8,3), \sigma_{f})$ has six negative edges. Hence the \textbf{rna} number of M\" obius-Kantor graph is six.}
\end{example}

\begin{example}\label{the rna number of Dodecahedron} \rm{The generalized Petersen graph $P(10,2)$ is known as the \textit{Dodecahedron} graph. A parity signed Dodecahedron with six negative edges is depicted in Figure~\ref{P_10,2}. This labeling is the one described in the proof of Theorem~\ref{rna number of P(2l,2)}. Clearly, the \textbf{rna} number of Dodecahedron graph is six.}
\end{example}

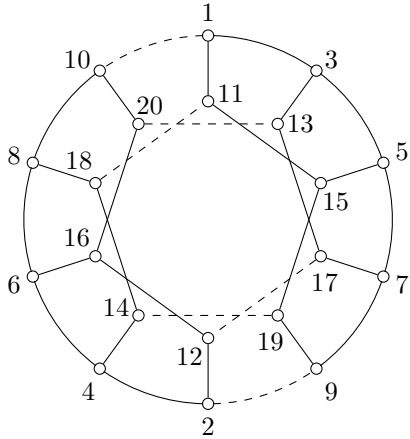
\begin{figure}[h]
\begin{center}
\begin{tikzpicture}[scale=0.35]

   \begin{scope}[rotate=90]
   
		\node[vertex] (v1) at ({360/10*0 }:7cm) {};
		\node[vertex] (v10) at ({360/10*1 }:7cm) {};
		\node[vertex] (v9) at ({360/10 *2 }:7cm) {};
		\node[vertex] (v8) at ({360/10 *3 }:7cm) {};
		\node[vertex] (v7) at ({360/10 *4 }:7cm) {};
		\node[vertex] (v6) at ({360/10 *5}:7cm) {};
		\node[vertex] (v5) at ({360/10 *6 }:7cm) {};
		\node[vertex] (v4) at ({360/10 *7}:7cm) {};
		\node[vertex] (v3) at ({360/10 *8}:7cm) {};
		\node[vertex] (v2) at ({360/10 *9}:7cm) {};
        \draw [dashed] ({360/10 *0 +1.5}:7cm) arc  ({360/10*0+1.5}:{360/10*1 -1.5 }:7cm);
        \draw ({360/10 *1 +1.5}:7cm) arc  ({360/10*1+1.5}:{360/10*2 -1.5 }:7cm);
        \draw  ({360/10 *2 +1.5}:7cm) arc  ({360/10*2+1.5}:{360/10*3 -1.5 }:7cm);
        \draw  ({360/10 *3 +1.5}:7cm) arc  ({360/10*3+1.5}:{360/10*4 -1.5 }:7cm);
        \draw ({360/10 *4 +1.5}:7cm) arc  ({360/10*4+1.5}:{360/10*5 -1.5 }:7cm);
        \draw [dashed] ({360/10*5 +1.5}:7cm) arc  ({360/10*5+1.5}:{360/10*6 -1.5 }:7cm);
        \draw  ({360/10 *6 +1.5}:7cm) arc  ({360/10*6+1.5}:{360/10*7 -1.5 }:7cm);
        \draw  ({360/10*7 +1.5}:7cm) arc  ({360/10*7+1.5}:{360/10*8 -1.5 }:7cm);
        \draw ({360/10 *8 +1.5}:7cm) arc  ({360/10*8+1.5}:{360/10*9 -1.5 }:7cm);
        \draw ({360/10 *9 +1.5}:7cm) arc  ({360/10*9+1.5}:{360/10*10 -1.5 }:7cm);
        
        \node[yshift=0.3cm] at ({360/10*0 }:7cm) {1};
		\node[xshift=-0.3cm,yshift=0.15cm] at ({360/10*1 }:7cm) {10};
		\node[xshift=-0.25cm, yshift=0.1cm] at ({360/10 *2 }:7cm) {8};
		\node[xshift=-0.25cm,yshift=-0.1cm] at ({360/10 *3 }:7cm) {6};
		\node[xshift=-0.15cm, yshift=-0.3cm] at ({360/10 *4 }:7cm) {4};
		\node[ yshift=-0.3cm] at ({360/10 *5}:7cm) {2};
		\node[xshift=0.2cm, yshift=-0.3cm] at ({360/10 *6 }:7cm) {9};
		\node[xshift=0.25cm, yshift=-0.1cm] at ({360/10 *7}:7cm) {7};
		\node[xshift=0.25cm, yshift=0.15cm] at ({360/10 *8}:7cm) {5};
		\node[xshift=0.2cm,  yshift=0.15cm] at ({360/10 *9}:7cm) {3};
  
  \node[vertex] (v11) at ({360/10*0 }:4.5cm) {};
  \node[vertex] (v20) at ({360/10*1 }:4.5cm) {};
  \node[vertex] (v19) at ({360/10*2 }:4.5cm) {};
  \node[vertex] (v18) at ({360/10*3 }:4.5cm) {};
  \node[vertex] (v17) at ({360/10*4 }:4.5cm) {};
  \node[vertex] (v16) at ({360/10*5 }:4.5cm) {};
  \node[vertex] (v15) at ({360/10*6 }:4.5cm) {};
  \node[vertex] (v14) at ({360/10*7 }:4.5cm) {};
  \node[vertex] (v13) at ({360/10*8 }:4.5cm) {};
  \node[vertex] (v12) at ({360/10*9 }:4.5cm) {};
  
    \draw ({360/10 *0 +1.5}:4.5cm) ;
    \draw ({360/10 *1 +1.5}:4.5cm) ;
    \draw ({360/10 *2 +1.5}:4.5cm) ;
    \draw ({360/10 *3 +1.5}:4.5cm) ;
    \draw ({360/10 *4 +1.5}:4.5cm) ;
    \draw ({360/10 *5 +1.5}:4.5cm) ;
    \draw ({360/10 *6 +1.5}:4.5cm) ;
    \draw ({360/10 *7 +1.5}:4.5cm) ;
    \draw ({360/10 *8 +1.5}:4.5cm) ;
    \draw ({360/10 *9 +1.5}:4.5cm) ;

   \node[xshift=0.3cm, yshift=0.1cm] at ({360/10 *0}:4.5cm) {11};
   \node[xshift=0.15cm, yshift=0.25cm] at ({360/10 *1}:4.5cm) {20};
   \node[xshift=-0.22cm, yshift=0.3cm] at ({360/10 *2}:4.5cm) {18};
   \node[xshift=-0.25cm, yshift=0.25cm] at ({360/10 *3}:4.5cm) {16};
   \node[xshift=-0.3cm, yshift=0.1cm] at ({360/10 *4}:4.5cm) {14};
   \node[xshift=-0.25cm, yshift=-0.25cm] at ({360/10 *5}:4.5cm) {12};
   \node[xshift=-0.1cm, yshift=-0.35cm] at ({360/10 *6}:4.5cm) {19};
   \node[xshift=0.05cm, yshift=-0.35cm] at ({360/10 *7}:4.5cm) {17};
   \node[xshift=0.2cm, yshift=-0.25cm] at ({360/10 *8}:4.5cm) {15};
   \node[xshift=0.3cm] at ({360/10 *9}:4.5cm) {13};
  
 \foreach \from/\to in {v4/v14,v3/v13,v2/v12,v1/v11,v10/v20,v9/v19,v8/v18,v7/v17,v6/v16,v5/v15,v13/v15,v16/v18,v17/v19,v18/v20,v11/v13,v12/v14} \draw (\from) -- (\to);
 
\foreach \from/\to in {v19/v11,v20/v12,v14/v16,v15/v17} \draw [dashed] (\from) -- (\to); 
 
 \end{scope}
  
  \end{tikzpicture}
\caption{Parity signed Dodecahedron graph with six negative edges}\label{P_10,2}
\end{center}
\end{figure}

\begin{example}\label{the rna number of Desargues}
\rm{The generalized Petersen graph $P(10,3)$ is known as the \textit{Desargues} graph. See Figure~\ref{P_10,3} for a parity signed Desargues graph. Since $\gcd(10,3)=1$, Theorem~\ref{rna number of P(2l,k) with gcd(n,k) = 1} gives $\sigma^{-}(P(10,3)) \geq 6$. Let the labeling $f : V(P(10,3)) \rightarrow \{1,2,...,20\}$ be defined by $f(v_{0}) = 11,~f(v_{1}) = 13,~f(v_{2}) = 12,~f(v_{3}) = 15,~f(v_{4}) = 17,~f(v_{5}) = 14,~f(v_{6}) = 16,~f(v_{7}) = 19,~f(v_{8}) = 18,~f(v_{9}) = 20$, and 
$$ \hspace{0.1in} f(u_{i}) = 
\begin{cases}
2i+1 \hspace{0.2in} ~\text{for}~ i=0,1,2,3,4,\\
2i-8 \hspace{0.2in} ~\text{for}~ i=5,6,7,8,9.
\end{cases}
$$
This vertex labeling is shown in Figure~\ref{P_10,3}. It is clear that $(P(10,3), \sigma_{f})$ has six negative edges. This gives $\sigma^{-}(P(10,3)) = 6$. Hence the \textbf{rna} number of Desargues graph is six. }
\end{example}

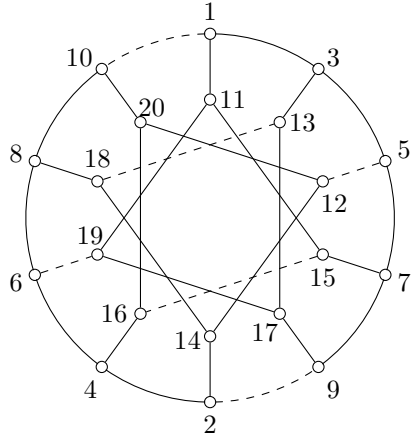
\begin{figure}[h]
\begin{center}
\begin{tikzpicture}[scale=0.35]
  
   \begin{scope}[rotate=90]

		\node[vertex] (v1) at ({360/10*0 }:7cm) {};
		\node[vertex] (v10) at ({360/10*1 }:7cm) {};
		\node[vertex] (v9) at ({360/10 *2 }:7cm) {};
		\node[vertex] (v8) at ({360/10 *3 }:7cm) {};
		\node[vertex] (v7) at ({360/10 *4 }:7cm) {};
		\node[vertex] (v6) at ({360/10 *5}:7cm) {};
		\node[vertex] (v5) at ({360/10 *6 }:7cm) {};
		\node[vertex] (v4) at ({360/10 *7}:7cm) {};
		\node[vertex] (v3) at ({360/10 *8}:7cm) {};
		\node[vertex] (v2) at ({360/10 *9}:7cm) {};
        \draw [dashed] ({360/10 *0 +1.5}:7cm) arc  ({360/10*0+1.5}:{360/10*1 -1.5 }:7cm);
        \draw ({360/10 *1 +1.5}:7cm) arc  ({360/10*1+1.5}:{360/10*2 -1.5 }:7cm);
        \draw ({360/10 *2 +1.5}:7cm) arc  ({360/10*2+1.5}:{360/10*3 -1.5 }:7cm);
        \draw ({360/10 *3 +1.5}:7cm) arc  ({360/10*3+1.5}:{360/10*4 -1.5 }:7cm);
        \draw ({360/10 *4 +1.5}:7cm) arc  ({360/10*4+1.5}:{360/10*5 -1.5 }:7cm);
        \draw [dashed] ({360/10*5 +1.5}:7cm) arc  ({360/10*5+1.5}:{360/10*6 -1.5 }:7cm);
        \draw ({360/10 *6 +1.5}:7cm) arc  ({360/10*6+1.5}:{360/10*7 -1.5 }:7cm);
        \draw ({360/10*7 +1.5}:7cm) arc  ({360/10*7+1.5}:{360/10*8 -1.5 }:7cm);
        \draw ({360/10 *8 +1.5}:7cm) arc  ({360/10*8+1.5}:{360/10*9 -1.5 }:7cm);
        \draw ({360/10 *9 +1.5}:7cm) arc  ({360/10*9+1.5}:{360/10*10 -1.5 }:7cm);
        
       \node[yshift=0.3cm] at ({360/10*0 }:7cm) {1};
		\node[xshift=-0.3cm,yshift=0.15cm] at ({360/10*1 }:7cm) {10};
		\node[xshift=-0.25cm, yshift=0.1cm] at ({360/10 *2 }:7cm) {8};
		\node[xshift=-0.25cm,yshift=-0.1cm] at ({360/10 *3 }:7cm) {6};
		\node[xshift=-0.15cm, yshift=-0.3cm] at ({360/10 *4 }:7cm) {4};
		\node[ yshift=-0.3cm] at ({360/10 *5}:7cm) {2};
		\node[xshift=0.2cm, yshift=-0.3cm] at ({360/10 *6 }:7cm) {9};
		\node[xshift=0.25cm, yshift=-0.1cm] at ({360/10 *7}:7cm) {7};
		\node[xshift=0.25cm, yshift=0.15cm] at ({360/10 *8}:7cm) {5};
		\node[xshift=0.2cm,  yshift=0.15cm] at ({360/10 *9}:7cm) {3};
  
  \node[vertex] (v11) at ({360/10*0 }:4.5cm) {};
  \node[vertex] (v20) at ({360/10*1 }:4.5cm) {};
  \node[vertex] (v19) at ({360/10*2 }:4.5cm) {};
  \node[vertex] (v18) at ({360/10*3 }:4.5cm) {};
  \node[vertex] (v17) at ({360/10*4 }:4.5cm) {};
  \node[vertex] (v16) at ({360/10*5 }:4.5cm) {};
  \node[vertex] (v15) at ({360/10*6 }:4.5cm) {};
  \node[vertex] (v14) at ({360/10*7 }:4.5cm) {};
  \node[vertex] (v13) at ({360/10*8 }:4.5cm) {};
  \node[vertex] (v12) at ({360/10*9 }:4.5cm) {};
  
    \draw ({360/10 *0 +1.5}:4.5cm) ;
    \draw ({360/10 *1 +1.5}:4.5cm) ;
    \draw ({360/10 *2 +1.5}:4.5cm) ;
    \draw ({360/10 *3 +1.5}:4.5cm) ;
    \draw ({360/10 *4 +1.5}:4.5cm) ;
    \draw ({360/10 *5 +1.5}:4.5cm) ;
    \draw ({360/10 *6 +1.5}:4.5cm) ;
    \draw ({360/10 *7 +1.5}:4.5cm) ;
    \draw ({360/10 *8 +1.5}:4.5cm) ;
    \draw ({360/10 *9 +1.5}:4.5cm) ;

   \node[xshift=0.3cm] at ({360/10 *0}:4.5cm) {11};
   \node[xshift=0.15cm, yshift=0.2cm] at ({360/10 *1}:4.5cm) {20};
   \node[ yshift=0.3cm] at ({360/10 *2}:4.5cm) {18};
   \node[xshift=-0.1cm, yshift=0.25cm] at ({360/10 *3}:4.5cm) {19};
   \node[xshift=-0.35cm] at ({360/10 *4}:4.5cm) {16};
   \node[xshift=-0.3cm, yshift=-0.05cm] at ({360/10 *5}:4.5cm) {14};
   \node[xshift=-0.2cm, yshift=-0.25cm] at ({360/10*6}:4.5cm) {17};
   \node[ yshift=-0.3cm] at ({360/10 *7}:4.5cm) {15};
   \node[xshift=0.15cm, yshift=-0.3cm] at ({360/10 *8}:4.5cm) {12};
   \node[xshift=0.3cm] at ({360/10 *9}:4.5cm) {13};
  
 \foreach \from/\to in {v4/v14,v2/v12,v1/v11,v10/v20,v9/v19,v7/v17,v6/v16,v5/v15,v13/v16,v15/v18,v16/v19,v17/v20,v18/v11,v20/v13,v11/v14,v12/v15} \draw (\from) -- (\to);
 
  \foreach \from/\to in {v19/v12,v14/v17,v3/v13,v8/v18} \draw [dashed] (\from) -- (\to);

\end{scope}  
  
  \end{tikzpicture}
\caption{A parity signed Desargues graph with six negative edges}\label{P_10,3}
\end{center}
\end{figure}

The generalized Petersen graph $P(12,5)$ is known as \textit{Nauru} graph. A parity signed Nauru graph is depicted in Figure~\ref{P_12,5}.

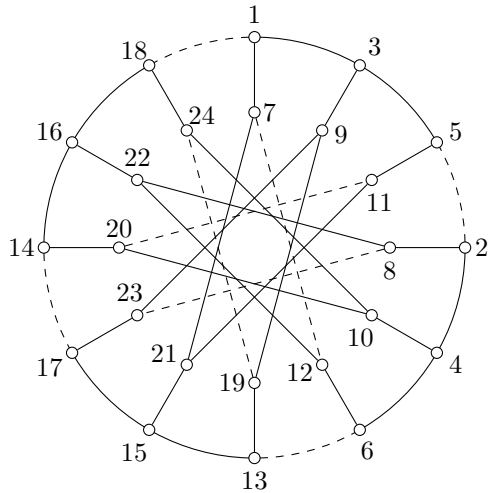
\begin{figure}[h]
\begin{center}
\begin{tikzpicture}[scale=0.40]
 \begin{scope}[rotate=90]
		\node[vertex] (v1) at ({360/12*0 }:7cm) {};
		\node[vertex] (v12) at ({360/12*1 }:7cm) {};
		\node[vertex] (v11) at ({360/12 *2 }:7cm) {};
		\node[vertex] (v10) at ({360/12 *3 }:7cm) {};
		\node[vertex] (v9) at ({360/12 *4 }:7cm) {};
		\node[vertex] (v8) at ({360/12 *5}:7cm) {};
		\node[vertex] (v7) at ({360/12 *6 }:7cm) {};
		\node[vertex] (v6) at ({360/12 *7}:7cm) {};
		\node[vertex] (v5) at ({360/12 *8}:7cm) {};
		\node[vertex] (v4) at ({360/12 *9}:7cm) {};
		\node[vertex] (v3) at ({360/12 *10}:7cm) {};
		\node[vertex] (v2) at ({360/12 *11}:7cm) {};
        \draw [dashed] ({360/12 *0 +1.5}:7cm) arc  ({360/12*0+1.5}:{360/12*1 -1.5 }:7cm);
        \draw ({360/12 *1 +1.5}:7cm) arc  ({360/12*1+1.5}:{360/12*2 -1.5 }:7cm);
        \draw ({360/12 *2 +1.5}:7cm) arc  ({360/12*2+1.5}:{360/12*3 -1.5 }:7cm);
        \draw [dashed] ({360/12 *3 +1.5}:7cm) arc  ({360/12*3+1.5}:{360/12*4 -1.5 }:7cm);
        \draw ({360/12 *4 +1.5}:7cm) arc  ({360/12*4+1.5}:{360/12*5 -1.5 }:7cm);
        \draw ({360/12*5 +1.5}:7cm) arc  ({360/12*5+1.5}:{360/12*6 -1.5 }:7cm);
        \draw [dashed] ({360/12 *6 +1.5}:7cm) arc  ({360/12*6+1.5}:{360/12*7 -1.5 }:7cm);
        \draw ({360/12*7 +1.5}:7cm) arc  ({360/12*7+1.5}:{360/12*8 -1.5 }:7cm);
        \draw  ({360/12 *8 +1.5}:7cm) arc  ({360/12*8+1.5}:{360/12*9 -1.5 }:7cm);
        \draw [dashed] ({360/12 *9 +1.5}:7cm) arc  ({360/12*9+1.5}:{360/12*10 -1.5 }:7cm);
        \draw ({360/12 *10 +1.5}:7cm) arc  ({360/12*10+1.5}:{360/12*11 -1.5 }:7cm);
        \draw  ({360/12*11+1.5}:7cm) arc  ({360/12*11+1.5}:{360/12*12 -1.5 }:7cm);
        
        \node[ yshift=0.3cm] at ({360/12*0 }:7cm) {1};
		\node[xshift=-0.2cm, yshift=0.2cm] at ({360/12*1 }:7cm) {18};
		\node[xshift=-0.3cm, yshift=0.15cm] at ({360/12 *2 }:7cm) {16};
		\node[xshift=-0.3cm] at ({360/12 *3 }:7cm) {14};
		\node[xshift=-0.3cm, yshift=-0.2cm] at ({360/12 *4 }:7cm) {17};
		\node[xshift=-0.2cm, yshift=-0.3cm] at ({360/12 *5}:7cm) {15};
		\node[ yshift=-0.3cm] at ({360/12 *6 }:7cm) {13};
		\node[xshift=0.1cm, yshift=-0.28cm] at ({360/12 *7}:7cm) {6};
		\node[xshift=0.25cm, yshift=-0.15cm] at ({360/12 *8}:7cm) {4};
		\node[xshift=0.22cm] at ({360/12 *9}:7cm) {2};
		\node[xshift=0.25cm, yshift=0.15cm] at ({360/12 *10}:7cm) {5};
		\node[xshift=0.2cm, yshift=0.25cm] at ({360/12 *11}:7cm) {3};

  \node[vertex] (v13) at ({360/12*0 }:4.5cm) {};
  \node[vertex] (v24) at ({360/12*1 }:4.5cm) {};
  \node[vertex] (v23) at ({360/12*2 }:4.5cm) {};
  \node[vertex] (v22) at ({360/12*3 }:4.5cm) {};
  \node[vertex] (v21) at ({360/12*4 }:4.5cm) {};
  \node[vertex] (v20) at ({360/12*5 }:4.5cm) {};
  \node[vertex] (v19) at ({360/12*6 }:4.5cm) {};
  \node[vertex] (v18) at ({360/12*7 }:4.5cm) {};
  \node[vertex] (v17) at ({360/12*8 }:4.5cm) {};
  \node[vertex] (v16) at ({360/12*9 }:4.5cm) {};
  \node[vertex] (v15) at ({360/12*10 }:4.5cm) {};
  \node[vertex] (v14) at ({360/12*11 }:4.5cm) {};
  
    \draw ({360/12 *0 +1.5}:4.5cm) ;
    \draw ({360/12 *1 +1.5}:4.5cm) ;
    \draw ({360/12 *2 +1.5}:4.5cm) ;
    \draw ({360/12 *3 +1.5}:4.5cm) ;
    \draw ({360/12 *4 +1.5}:4.5cm) ;
    \draw ({360/12 *5 +1.5}:4.5cm) ;
    \draw ({360/12 *6 +1.5}:4.5cm) ;
    \draw ({360/12 *7 +1.5}:4.5cm) ;
    \draw ({360/12 *8 +1.5}:4.5cm) ;
    \draw ({360/12 *9 +1.5}:4.5cm) ;
    \draw ({360/12 *10 +1.5}:4.5cm) ;
    \draw ({360/12 *11 +1.5}:4.5cm) ;
    
   \node[xshift=0.2cm] at ({360/12 *0}:4.5cm) {7};
   \node[xshift=0.2cm, yshift=0.2cm] at ({360/12 *1}:4.5cm) {24};
   \node[ yshift=0.3cm] at ({360/12 *2}:4.5cm) {22};
   \node[ yshift=0.25cm] at ({360/12 *3}:4.5cm) {20};
   \node[xshift=-0.1cm, yshift=0.3cm] at ({360/12 *4}:4.5cm) {23};
   \node[xshift=-0.3cm, yshift=0.15cm] at ({360/12 *5}:4.5cm) {21};
   \node[xshift=-0.3cm] at ({360/12 *6}:4.5cm) {19};
   \node[xshift=-0.3cm, yshift=-0.1cm] at ({360/12 *7}:4.5cm) {12};
   \node[xshift=-0.15cm, yshift=-0.3cm] at ({360/12 *8}:4.5cm) {10};
   \node[ yshift=-0.3cm] at ({360/12 *9}:4.5cm) {8};
   \node[xshift=0.1cm, yshift=-0.3cm] at ({360/12 *10}:4.5cm) {11};
   \node[xshift=0.25cm, yshift=-0.05cm] at ({360/12 *11}:4.5cm) {9};

  \foreach \from/\to in {v4/v16,v3/v15,v2/v14,v1/v13,v12/v24,v11/v23,v10/v22,v9/v21,v8/v20,v7/v19,v6/v18,v5/v17,v14/v19,v15/v20,v17/v22,v18/v23,v20/v13,v21/v14,v23/v16,v24/v17} \draw (\from) -- (\to);
  
    \foreach \from/\to in {v13/v18,v22/v15,v16/v21,v19/v24} \draw [dashed] (\from) -- (\to);
  
\end{scope}  
  
  \end{tikzpicture}
\caption{A parity signed Nauru graph with eight negative edges}\label{P_12,5}
\end{center}
\end{figure}

\begin{lemma}\label{lemma for rna number of Nauru}
The \textbf{rna} number of Nauru graph is at least 8.
\end{lemma}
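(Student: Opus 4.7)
The plan is to combine the general lower bound from Theorem~\ref{rna number of P(2l,k) with gcd(n,k) = 1}, which applies since $\gcd(12,5)=1$ and gives $\sigma^{-}(P(12,5)) \geq 6$, with Lemma~\ref{killing possibility of odd cuts}, which (for even $n$) rules out every odd cut with equal sides and so eliminates the value $7$. To conclude $\sigma^{-}(P(12,5)) \geq 8$, it will then suffice to rule out any cut of size $6$ with equal sides.

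Assuming such a cut exists, I will set up the bookkeeping as follows. Let $A \subset V(P(12,5))$ with $|A|=12$ and $|[A:A^c]|=6$, let $U,V \subseteq \mathbb{Z}_{12}$ denote the index sets of $u$- and $v$-vertices of $A$, and let $e_o, e_I, e_s$ be the numbers of cut edges on the outer cycle $C_o$, on the (unique, since $\gcd(12,5)=1$) inner cycle $C_I$, and among the spokes, respectively. Standard observations show $A$ must meet both sides of $C_o$ and both sides of $C_I$ (else $e_s = 12$), forcing $e_o, e_I \geq 2$; as cuts of a cycle, $e_o$ and $e_I$ are even. Thus the only possibilities are $(e_o,e_I,e_s) \in \{(2,2,2),(2,4,0),(4,2,0)\}$.

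The two sub-cases with $e_s=0$ force $U=V$ and $|U|=|V|=6$. In $(2,4,0)$, $U$ is a cyclic interval of $C_o$ of length $6$, and a direct count of the edges $v_iv_{i+5}$ with both endpoints in $U$ (respectively in $U^c$) yields exactly one edge on each side, hence $e_I = 12 - 2 = 10 \neq 4$. The sub-case $(4,2,0)$ is handled symmetrically: $V$ must be a cyclic interval of $C_I$ of length $6$, and the analogous count gives $e_o = 10 \neq 4$.

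The main obstacle is the case $(2,2,2)$. Here $U = \{j, j+1, \ldots, j+a-1\}$ is a cyclic interval of $C_o$ and $V = \{r, r+5, r+10, \ldots, r+5(b-1)\} \pmod{12}$ is a cyclic interval of $C_I$, with $a+b=12$ and $e_s = |U \triangle V| = a+b-2|U\cap V| = 2$, i.e., $|U\cap V|=5$. This forces $a,b \geq 5$, leaving only $(a,b) \in \{(5,7),(6,6),(7,5)\}$. Using the translation automorphism of $P(12,5)$ I will fix $j=0$ and enumerate the twelve values of $r$ in each case; the maxima of $|U\cap V|$ turn out to be $3$, $4$, and $3$ respectively. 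Since $|U\cap V|=5$ is never realised, this case is also excluded and the lemma follows.
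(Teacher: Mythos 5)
Your proposal is correct and follows essentially the same route as the paper: both reduce to excluding a size-six cut with equal sides via Theorem~\ref{rna number of P(2l,k) with gcd(n,k) = 1} and Lemma~\ref{killing possibility of odd cuts}, then case on how the six cut edges distribute over $C_o$, $C_I$ and the spokes, and finish by showing a cyclic interval of $C_o$ and one of $C_I$ cannot share five indices. Your explicit bookkeeping $e_s=|U\triangle V|=a+b-2|U\cap V|$ and the translation-based enumeration just make precise the paper's ``it is easy/simple to check'' steps; the substance is identical.
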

\begin{proof}
Theorem~\ref{rna number of P(2l,k) with gcd(n,k) = 1} gives that $\sigma^{-}(P(12,5))\geq 6$. Also, Lemma~\ref{killing possibility of odd cuts} shows that the \textbf{rna} number of Nauru graph cannot be seven. Thus it remains to prove that $\sigma^{-}(P(12,5))$ cannot be six.

By contradiction, let $P(12,5)$ have a cut of size six with equal sides. Then there exists a subset $A \subset V(P(12,5))$ such that $|A|=12$ and $|[A : A^c]| = 6$. Note that $A$ must contain $u$-vertices as well as $v$-vertices. Clearly, there is only one inner cycle $C_I$ in $P(12,5)$ since $\gcd(12,5) = 1$. Thus the cut $[A : A^c]$ will contain even number of edges of both $C_o$ and $C_I$, because $A$ and $A^c$ contain $u$-vertices as well as $v$-vertices. Hence we analyse two cases.

\textbf{Case 1.} Let $[A : A^c]$ contain four edges of $C_o$ (or $C_I$) and two edges of $C_I$ (or $C_o$). Assume that $[A : A^c]$ contains four edges of $C_o$ and two edges of $C_I$. The case that $[A : A^c]$ contains two edges of $C_0$ and four edges of $C_I$ can be treated similarly. Since $[A : A^c]$ contains no spoke, $A$ has to have six $u$-vertices and six $v$-vertices. 

Note that all $v$-vertices of $A$ induce a path of order six, since exactly two edges of $C_I$ are lying across $A$ and $A^c$. Thus the set of $v$-vertices in $A$ is $\{v_{r},v_{r+5},v_{r+10},v_{r+3},v_{r+8},v_{r+1}\}$, for some $r \in \{0,1,...,11\}$. Consequently, the set of $u$-vertices in $A$ is $\{u_{r},u_{r+5},u_{r+10},u_{r+3},u_{r+8},u_{r+1}\}$, as no spoke of $P(12,5)$ lies in $[A : A^c]$. Now for any $r \in \{0,1,...,11\}$, the edge-cut $[A : A^c]$ contains the following edges of $C_o$: $u_{r+1}u_{r+2}, u_{r+2}u_{r+3}, u_{r+3}u_{r+4}, u_{r+4}u_{r+5}, u_{r+5}u_{r+6}, u_{r+7}u_{r+8}, u_{r+8}u_{r+9}, u_{r+9}u_{r+10}, u_{r+10}u_{r+11}$ and $u_{r+11}u_{r}$. Thus $|[A : A^c]| \geq 12$, a contradiction.

\textbf{Case 2.} Let $[A : A^c]$ contain two edges of $C_o$, two edges of $C_I$ and two spokes. In this case, $A$ (or $A^c$) cannot have more than seven $u$-vertices or $v$-vertices, otherwise the number of spokes in $[A : A^c]$ will exceed two and a contradiction will occur. Hence we consider two sub-cases depending upon whether $A$ has seven $u$-vertices or six $u$-vertices. 

\textit{Subcase 2(i).} Let $A$ have seven $u$-vertices (or seven $v$-vertices) and five $v$-vertices (or five $u$-vertices). Without loss of generality, assume that $A$ has seven $u$-vertices and five $v$-vertices. These seven $u$-vertices and five $v$-vertices induce paths of order seven and five, respectively. Therefore set $A$ must be of form $\{u_{j},u_{j+1},u_{j+2},u_{j+3},u_{j+4},u_{j+5},u_{j+6},v_{r},v_{r+5},v_{r+10},v_{r+3},v_{r+8}\}$, for some $j,r \in \{0,1,...,11\}$. It is easy to check that all five $v$-vertices of $A$ cannot be adjacent to five $u$-vertices of $A$, for any $j~\text{and}~r$. This means at most four $v$-vertices of $A$ can have their partners in $A$. Thus $[A : A^c]$ will contain at least four spokes, a contradiction.

\textit{Subcase 2(ii).} Let $A$ have six $u$-vertices and six $v$-vertices. Since $[A : A^c]$ has only two edges of each $C_o$ and $C_I$, the $u$-vertices and $v$-vertices of $A$ form two paths of order six. Thus $A$ must be $\{u_{j},u_{j+1},u_{j+2},u_{j+3},u_{j+4},u_{j+5},v_{r},v_{r+5},v_{r+10},v_{r+3},v_{r+8},v_{r+1}\}$, for some $j,r \in \{0,1,...,11\}$. For any $j~\text{and}~r$, it is a simple checking that at most four $v$-vertices of $A$ can have their partners in $A$. Therefore $[A : A^c]$ will contain at least four spokes, a contradiction. This completes the proof.
\end{proof}

\begin{example}\label{the rna number of Nauru}
\rm{By Lemma~\ref{lemma for rna number of Nauru}, we have $\sigma^{-}(P(12,5)) \geq 8$. Let $f : V(P(12,5)) \rightarrow \{1,2,...,24\}$ be defined by $$ f(u_{i}) = 
 \begin{cases}
2i+1 \hspace{0.2in} ~\text{for}~ i=0,1,2,6,7,8,\\
2i-4\hspace{0.2in} ~\text{for}~ i=3,4,5,9,10,11;
\end{cases}
$$
and 
$$ \hspace{0.1in} f(v_{i}) = 
\begin{cases}
2i+7 \hspace{0.2in} ~\text{for}~ i=0,1,2,6,7,8,\\
2i+2 \hspace{0.2in} ~\text{for}~ i=3,4,5,9,10,11.
\end{cases}
$$
This vertex labeling is shown in Figure~\ref{P_12,5}. Clearly, $(P(12,5), \sigma_{f})$ has eight negative edges. Hence the \textbf{rna} number of Nauru graph is eight.}
\end{example}

\section{Regular Graphs with \textbf{rna} Number One}\label{section of smallest cubic graphs}

For a given graph $G$, distribution of odd and even integers to the vertices of $G$ is the crucial aspect of determining the \textbf{rna} number of $G$. However, obvious lower and upper bounds for the \textbf{rna} number of $G$ are 1 and $m$, respectively, where $m$ is the size of $G$. It is shown in~\cite[Proposition 4]{Acharya1} that the \textbf{rna} number of a path of order $n \geq 2$ is one. In~\cite{Acharya2}, it is shown that a graph with \textbf{rna} number one must have a cut-edge. More precisely, we have the following theorem. 

\begin{theorem}\cite[Theorem 3.5]{Acharya2}\label{hint 1 for rna number 1}
Let $G$ be a connected graph. Then $\sigma^{-}(G) = 1$ if and only if $G$ has a cut-edge joining two graphs whose orders differ by at most one.
\end{theorem}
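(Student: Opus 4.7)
The plan is to exploit the correspondence that a parity signed labeling of $G$ on $n$ vertices amounts to a bipartition of $V(G)$ into an odd-labeled set $A$ with $|A| = \lceil n/2 \rceil$ and an even-labeled set $B$ with $|B| = \lfloor n/2 \rfloor$, under which the negative edges are precisely the edges of the cut $[A : B]$. Both directions then reduce to translating between cuts of size one and cut-edges.

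For the forward direction, suppose $\sigma^{-}(G) = 1$ and fix a bijection $f$ whose induced signing $\sigma_{f}$ has exactly one negative edge $e$. Defining $A$, $B$ as above yields $[A : B] = \{e\}$ and $\bigl||A| - |B|\bigr| \leq 1$. Since no edge of $G - e$ crosses between $A$ and $B$, each connected component of $G - e$ lies entirely in $A$ or entirely in $B$. Because $G$ is connected and $e$ is a single edge, $G - e$ has at most two components; together with $A, B$ both being nonempty for $n \geq 2$, this forces $G - e$ to have exactly two components, namely $G[A]$ and $G[B]$. Thus $e$ is a cut-edge whose two sides have orders differing by at most one.

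For the converse, suppose $G$ has a cut-edge $e$ whose removal yields components $C_1$ and $C_2$ with $\bigl||V(C_1)| - |V(C_2)|\bigr| \leq 1$. Since $\{1, \ldots, n\}$ contains $\lceil n/2 \rceil$ odd and $\lfloor n/2 \rfloor$ even integers, these counts match $\{|V(C_1)|, |V(C_2)|\}$, so we can construct a bijection $f : V(G) \to \{1, \ldots, n\}$ that assigns the odd labels to the larger component (breaking ties arbitrarily when $n$ is even) and the even labels to the other. Under $\sigma_{f}$ an edge is negative exactly when its endpoints lie in different components of $G - e$, so $e$ is the unique negative edge and $\sigma^{-}(G) \leq 1$. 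Connectedness of $G$ rules out $\sigma^{-}(G) = 0$, since otherwise the odd- and even-labeled vertices would form two nonempty vertex sets with no edge between them; hence $\sigma^{-}(G) = 1$.

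The argument is essentially a direct unwinding of definitions and poses no substantial obstacle. The only moment that needs care is the forward step, where one must argue that the partition $(A, B)$ coincides with the component decomposition of $G - e$; this follows from the combination of $[A : B]$ having size one and $G - e$ having at most two components.
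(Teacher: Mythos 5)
Your proof is correct. Note that the paper itself gives no proof of this statement --- it is quoted from \cite[Theorem 3.5]{Acharya2} --- so there is nothing to compare against; your argument is the natural unwinding of the definitions (identifying the odd/even label classes with the two sides of a size-one cut, and using connectedness to rule out $\sigma^{-}(G)=0$) and it is sound.
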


Note that having only a cut-edge is not enough for a graph to have \textbf{rna} number one. For example, consider the graph $G$ obtained by adding an edge $e$ to a vertex of the complete graph $K_3$. It is easy to check that $\sigma^{-}(G) = 2$.


An \textit{even regular} graph is a regular graph in which every vertex has even degree. Similarly, an \textit{odd regular} graph is a regular graph in which every vertex has odd degree. Since an even regular graph cannot have a cut-edge, in light of Theorem~\ref{hint 1 for rna number 1}, the \textbf{rna} number of an even graph is at least two. Therefore, the following problem is worth exploring.

\begin{problem}\label{prob for odd regular graph with rna no 1}
For an odd $k \geq 3$, what is the minimum order of a $k$-regular graph whose \textbf{rna} number is one?
\end{problem}

In order to address this problem, first we construct a $(4n-1)$-regular graph with a cut-edge joining two graphs of order $6n-1$ each. 

\begin{lemma}\label{contruction of odd 4n-1 graphs with rna no one}
There exists a $(4n-1)$-regular graph on $12n-2$ vertices with a cut-edge joining two graphs of order $6n-1$ each.
\end{lemma}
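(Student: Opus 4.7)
The plan is to build a single ``half-graph'' $H$ on $6n-1$ vertices having one distinguished vertex $v_0$ of degree $4n-2$ and all other vertices of degree $4n-1$. Then I would take two disjoint copies $H_1,H_2$ of $H$ and add the edge $v_0^{(1)}v_0^{(2)}$; the result is automatically a $(4n-1)$-regular graph on $12n-2$ vertices in which the added edge is a cut-edge separating two subgraphs of order $6n-1$.

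To construct $H$, I would partition $V(H)\setminus\{v_0\}$ into a set $A$ of size $4n-2$ and a set $B$ of size $2n$, and include the following edges:
\begin{enumerate}
\item[(i)] every edge from $v_0$ to a vertex of $A$;
\item[(ii)] the complete bipartite graph between $A$ and $B$;
\item[(iii)] the complete graph on $A$ with a $(2n-1)$-regular subgraph deleted;
\item[(iv)] the complete graph on $B$ with a $(2n-2)$-regular subgraph deleted.
\end{enumerate}
Both regular graphs to be deleted have feasible parameters: $(2n-1)(4n-2)$ and $(2n-2)(2n)$ are even, and $2n-1<4n-2$, $2n-2<2n$, so the standard existence conditions are met.

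A short degree count confirms the required degree sequence: $\deg(v_0)=|A|=4n-2$; for each $a\in A$,
$$\deg(a)=1+\bigl((4n-3)-(2n-1)\bigr)+2n=1+(2n-2)+2n=4n-1;$$
and for each $b\in B$,
$$\deg(b)=0+(4n-2)+\bigl((2n-1)-(2n-2)\bigr)=(4n-2)+1=4n-1.$$
Connectedness of $H$ is immediate since $v_0$ is adjacent to all of $A$ and every $b\in B$ has $4n-2\ge 1$ neighbors in $A$.

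Finally, in the graph obtained by gluing $H_1$ to $H_2$ via the bridge $v_0^{(1)}v_0^{(2)}$, every vertex has degree $4n-1$, the edge $v_0^{(1)}v_0^{(2)}$ is the only edge between the two halves, and its deletion leaves precisely $H_1$ and $H_2$, each on $6n-1$ vertices. The entire argument is a construction followed by degree book-keeping; the only point requiring a moment of care is ensuring that the ``filler'' regular subgraphs inside $A$ and inside $B$ have admissible parameters, which is straightforward.
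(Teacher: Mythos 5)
Your proposal is correct. The overall skeleton is the same as the paper's: build a ``half-graph'' $H$ on $6n-1$ vertices with a single deficient vertex $v_0$ of degree $4n-2$ and all other vertices of degree $4n-1$, then bridge two disjoint copies at their deficient vertices. Where you differ is in how $H$ is built. The paper takes the power graph $C_{6n-1}^{2n-1}$, which is $(4n-2)$-regular, and adds the $3n-1$ chords $v_iv_{i+(3n-1)}$ for $1\leq i\leq 3n-1$; these chords cover each vertex other than $v_0$ exactly once, yielding the desired degree sequence in one fully explicit circulant-style construction. You instead tripartition the vertex set as $\{v_0\}\cup A\cup B$ with $|A|=4n-2$, $|B|=2n$, and assemble $H$ from a star at $v_0$, a complete bipartite graph between $A$ and $B$, and complete graphs on $A$ and $B$ with regular subgraphs of appropriate degree removed. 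Your degree bookkeeping checks out (including the edge case $n=1$, where your graph has ten vertices, matching the paper's Figure~\ref{figure of a Cubic graph of order ten}), and the parity and size conditions you verify are exactly what the standard existence criterion for $k$-regular graphs on $m$ vertices requires. The trade-off is that your construction imports that external existence fact (harmless, and itself realizable by circulants), whereas the paper's is self-contained; on the other hand, your modular decomposition makes the degree count more transparent and is easier to adapt to other target degrees.
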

\begin{proof}
Let $C_{6n-1}$ be the cycle with $V(C_{6n-1}) = \{v_{0},v_{1},v_{2},...,v_{6n-2}\}$ and $E(C_{6n-1}) = \{v_{i}v_{i+1}~|~ 0 \leq i \leq 6n-2\}$, where subscripts are read modulo $6n-1$. Consider the power graph $C_{6n-1}^{2n-1}$ of $C_{6n-1}$. Note that the degree of each vertex in $C_{6n-1}^{2n-1}$ is $4n-2$. Now for each $1 \leq i \leq 3n-1$, insert an edge between $v_{i}$ and $v_{i+(3n-1)}$ in $C_{6n-1}^{2n-1}$, and denote the graph so obtained by $G_{r}$. Clearly, the order of $G_{r}$ is $6n-1$ and the degree of $v_{0}$ is $4n-2$, whereas the degree of all other vertices in $G_{r}$ is $4n-1$. Now take two disjoint copies of $G_r$ and join their $v_{0}$-vertices by an edge. This resulting graph is the required graph.
\end{proof}

By Theorem~\ref{hint 1 for rna number 1}, the $(4n-1)$-regular graph constructed in Lemma~\ref{contruction of odd 4n-1 graphs with rna no one} has \textbf{rna} number one.

\begin{lemma}\label{lemma 1 of cubic graphs }
A cubic graph of order four cannot have \textbf{rna} number one.
 \end{lemma}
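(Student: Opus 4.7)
The plan is to observe that there is essentially nothing to check, because the only cubic (that is, $3$-regular) graph on four vertices is the complete graph $K_4$. Indeed, a $3$-regular graph on $4$ vertices must have every pair of distinct vertices adjacent, so uniqueness is immediate.

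Having reduced the statement to a claim about $K_4$, I would close the argument by either of two routes. The first, most direct route is to invoke Theorem~\ref{hint 1 for rna number 1}: a connected graph has \textbf{rna} number one if and only if it possesses a cut-edge joining two components of nearly equal order. Since $K_4$ is $3$-edge-connected, it contains no cut-edge whatsoever, and thus $\sigma^{-}(K_4) \neq 1$.

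As a completely independent cross-check, the result also drops out of the \textbf{rna} number formula for complete graphs: by the stated theorem, $\sigma^{-}(K_n) = \lceil n/2 \rceil \lfloor n/2 \rfloor$, which for $n=4$ gives $\sigma^{-}(K_4) = 4 \neq 1$. Equivalently, any bipartition of $V(K_4)$ into two sets of size $2$ produces a complete bipartite cut of size $2 \cdot 2 = 4$, so every cut with equal sides already has four edges. Either way the statement follows, with the only genuine content being the classification of cubic graphs on four vertices. Since both tools needed are already on the page, I do not foresee any real obstacle.
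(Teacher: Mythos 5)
Your proposal is correct and its primary route---identifying $K_4$ as the unique cubic graph on four vertices and then invoking Theorem~\ref{hint 1 for rna number 1} via the absence of a cut-edge---is exactly the argument the paper gives. The secondary cross-check via $\sigma^{-}(K_4)=4$ is a harmless bonus but not needed.
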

\begin{proof}
The only cubic graph on four vertices is $K_4$, which does not have a cut-edge. Hence the result follows by Theorem~\ref{hint 1 for rna number 1}.
\end{proof}

\begin{lemma}\label{lemma 2 of cubic graphs }
A cubic graph of order six cannot have \textbf{rna} number one.
\end{lemma}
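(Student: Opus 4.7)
The plan is to invoke Theorem~\ref{hint 1 for rna number 1} and derive a quick degree-count contradiction. Suppose, for contradiction, that a cubic graph $G$ of order six has $\sigma^{-}(G)=1$. Then by Theorem~\ref{hint 1 for rna number 1}, $G$ contains a cut-edge $e=uv$ whose removal splits $G$ into two components $H_1$ and $H_2$ whose orders differ by at most one. Since $|V(G)|=6$, the only way to partition six vertices into two nonempty parts differing by at most one is $3+3$, so $|V(H_1)|=|V(H_2)|=3$.

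Without loss of generality, take $u\in V(H_1)$ and $v\in V(H_2)$. In $G$ every vertex has degree three; after removing the single edge $e$, the vertex $u$ has degree $2$ in $H_1$, but the remaining two vertices of $H_1$ still have degree $3$ in $H_1$. However, in any simple graph on three vertices the maximum possible degree is $2$, which is the contradiction we need.

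Thus no cubic graph of order six can have \textbf{rna} number one. The argument is short and the only real step is identifying the forced $3+3$ split from Theorem~\ref{hint 1 for rna number 1}; there is no genuine obstacle, since the degree bound on a simple graph with three vertices immediately rules out the configuration.
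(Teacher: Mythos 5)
Your proof is correct, but it takes a different route from the paper. The paper's proof simply exhibits the two non-isomorphic cubic graphs of order six ($K_{3,3}$ and the triangular prism), observes that neither contains a cut-edge, and then invokes Theorem~\ref{hint 1 for rna number 1}; it therefore rests on having a complete catalogue of cubic graphs on six vertices. You instead argue structurally: the theorem forces a $3+3$ split across a cut-edge $uv$, and then the two vertices of $H_1$ other than $u$ would each need degree three inside a simple graph on three vertices, which is impossible. Your argument buys independence from the enumeration (no need to verify the list of cubic graphs is complete), and it in fact proves slightly more --- the same degree count rules out a cut-edge in a cubic graph of order six for \emph{any} split ($1+5$ and $2+4$ fail even more easily), whereas the paper's check is tied to the two specific pictures. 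The paper's approach, in exchange, is concrete and immediately verifiable by inspection of Figure~\ref{different cubic graphs on 6 vertices}. Both proofs are valid; yours is the more self-contained of the two.
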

\begin{proof}
Non-isomorphic cubic graphs of order six are those as shown in Figure~\ref{different cubic graphs on 6 vertices}. Clearly, none of these graphs contain a cut-edge.  Thus, in light of Theorem~\ref{hint 1 for rna number 1}, the result follows.
\end{proof}

\begin{figure}[ht]
\begin{subfigure}{0.45\textwidth}
\begin{tikzpicture}[scale=0.55]
\hspace{1.2in}
\node[vertex] (v1) at (19,2) {};
\node[vertex] (v2) at (19,5) {};
\node[vertex] (v3) at (19,8) {};
\node[vertex] (v4) at (24,2) {};
\node[vertex] (v5) at (24,5) {};
\node[vertex] (v6) at (24,8) {};

\foreach \from/\to in {v1/v4,v1/v5,v1/v6,v2/v4,v2/v5,v2/v6,v3/v4,v3/v5,v3/v6} \draw (\from) -- (\to);

\end{tikzpicture}\label{unbal_2}
\end{subfigure}
\begin{subfigure}{0.45\textwidth}
\begin{tikzpicture}[scale=0.55]
\hspace*{0.6in}
\node[vertex] (v1) at (9,2) {};
\node[vertex] (v2) at (14,2) {};
\node[vertex] (v3) at (11.5,4) {};
\node[vertex] (v4) at (11.5,6) {};
\node[vertex] (v5) at (9,8) {};
\node[vertex] (v6) at (14,8) {};

\foreach \from/\to in {v1/v2,v1/v5,v1/v3,v2/v3,v2/v6,v3/v4,v4/v5,v4/v6,v5/v6} \draw (\from) -- (\to);

\end{tikzpicture}
\end{subfigure}
\caption{Non-isomorphic cubic graphs of order six}\label{different cubic graphs on 6 vertices}
\end{figure}
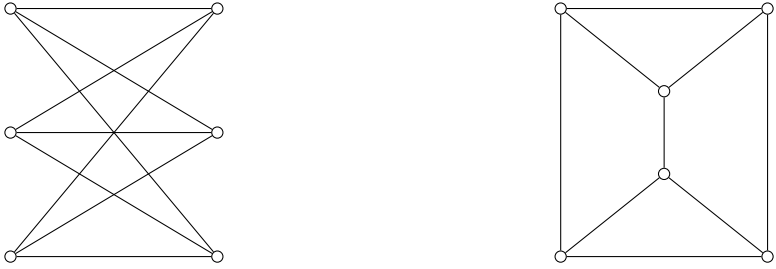

\begin{lemma}\label{lemma 3 of cubic graphs }
A cubic graph of order eight cannot have \textbf{rna} number one.
\end{lemma}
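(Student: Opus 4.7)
My plan is to invoke Theorem~\ref{hint 1 for rna number 1} and derive a parity contradiction from the handshaking lemma. Suppose, toward a contradiction, that $G$ is a cubic graph of order eight with $\sigma^{-}(G)=1$. By Theorem~\ref{hint 1 for rna number 1}, $G$ must contain a cut-edge $e=uv$ such that the two components of $G-e$ have orders differing by at most one. Since the total order is eight, both components must have order exactly four; call them $G_1$ (containing $u$) and $G_2$ (containing $v$).

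Next I would analyze degrees inside $G_1$. Every vertex of $G$ has degree three, and the only edge of $G$ leaving $G_1$ is $e$, which is incident to $u$. Therefore in $G_1$, the vertex $u$ has degree $3-1=2$, while the remaining three vertices of $G_1$ retain their full degree three. The sum of degrees in $G_1$ is then $2+3+3+3=11$, which is odd. This violates the handshaking lemma applied to the subgraph $G_1$, yielding the desired contradiction. Hence no such cut-edge configuration exists, so $\sigma^{-}(G)\neq 1$.

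There is no substantial obstacle here: the whole argument is a one-line parity count once Theorem~\ref{hint 1 for rna number 1} has forced a balanced $4{+}4$ split across a cut-edge. The same argument, incidentally, explains why Lemma~\ref{lemma 1 of cubic graphs } and Lemma~\ref{lemma 2 of cubic graphs } could alternatively have been proved this way for the even-order case, and it generalizes immediately to show that no cubic graph of any even order $2m$ with $m$ even admits such a balanced cut-edge split.
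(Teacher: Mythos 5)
Your proof is correct, but it takes a genuinely different route from the paper. The paper proves this lemma by enumeration: it lists the five non-isomorphic cubic graphs of order eight (Figure~\ref{different cubic graphs on 8 vertices}), observes that none of them contains a cut-edge, and then invokes Theorem~\ref{hint 1 for rna number 1}. You instead combine Theorem~\ref{hint 1 for rna number 1} with a handshaking-lemma parity count: a balanced cut-edge would force a component $G_1$ of order four in which the degree sum is $2+3+3+3=11$, which is odd --- a contradiction. Your argument is sound (since $e$ is a cut-edge, it is indeed the unique edge of $G$ joining the two components of $G-e$, so exactly one vertex of $G_1$ loses exactly one degree), and it buys two things the paper's proof does not: it avoids relying on the classification of cubic graphs of order eight, and it generalizes at once to every cubic graph of order $2m$ with $m$ even, exactly as you note (and correctly \emph{not} to $m$ odd, which is consistent with the paper's order-ten example in Lemma~\ref{cubic graph with rna 1}). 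The paper's enumeration does establish the slightly stronger fact that these graphs have no cut-edge whatsoever, but that extra strength is not needed for the lemma. It is also worth noting that the same degree-sum count shows any cut-edge of a cubic graph separates two components of odd order, which is the cleaner structural statement underlying both your argument and the paper's observation.
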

\begin{proof}
There are five non-isomorphic cubic graphs of order eight. These eight graphs are depicted in Figure~\ref{different cubic graphs on 8 vertices}. It is clear that none of these graphs contain a cut-edge. Hence by Theorem~\ref{hint 1 for rna number 1}, the result follows.
\end{proof}

\begin{lemma}\label{cubic graph with rna 1}
There exists a parity signed cubic graph of order ten with \textbf{rna} number one. 
\end{lemma}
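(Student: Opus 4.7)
The plan is to invoke Theorem~\ref{hint 1 for rna number 1}: a connected graph has \textbf{rna} number one iff it contains a cut-edge whose removal leaves two pieces whose orders differ by at most one. Since we want a cubic graph on ten vertices, the cut-edge must separate $G$ into two pieces of order five each. After deleting such a cut-edge $e = v_1 v_2$, each side must be a graph on five vertices in which $v_i$ has degree two (it lost its incident cut-edge) and the other four vertices have degree three. So the task reduces to exhibiting a graph $H$ on five vertices with degree sequence $(3,3,3,3,2)$, taking two disjoint copies, and gluing them with an edge between the two degree-two vertices.

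First I would check feasibility: the degree sum $2 + 4\cdot 3 = 14$ is even, so by the handshake lemma such an $H$ can exist on $7$ edges. Next I would write one down explicitly. A convenient choice is to take the 5-cycle on $\{1,2,3,4,5\}$ and add two chords, producing the edge set
\[
E(H) = \{12,\ 23,\ 34,\ 45,\ 15,\ 13,\ 24\}.
\]
A quick degree count gives $\deg(1)=\deg(2)=\deg(3)=\deg(4)=3$ and $\deg(5)=2$, as required.

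Then I would form $G$ by taking two vertex-disjoint copies $H_1, H_2$ of $H$ (relabel vertices accordingly, and let $w_i$ denote the degree-two vertex of $H_i$, $i=1,2$) and inserting the single edge $w_1 w_2$. By construction $G$ has $10$ vertices, every vertex has degree $3$, and $w_1 w_2$ is a cut-edge of $G$ whose removal yields the two components $H_1$ and $H_2$, each of order $5$.

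Finally, I would apply Theorem~\ref{hint 1 for rna number 1} directly: $G$ is connected (each $H_i$ is connected and the bridge joins them), and the cut-edge $w_1 w_2$ separates it into parts whose orders $|V(H_1)| = |V(H_2)| = 5$ differ by $0 \leq 1$. Hence $\sigma^{-}(G) = 1$, proving the lemma. There is no real obstacle here; the only substantive step is writing down a valid $5$-vertex graph realizing the degree sequence $(3,3,3,3,2)$, and the cited theorem supplies the rest.
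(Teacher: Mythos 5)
Your construction is correct and is essentially the same as the paper's: the paper also exhibits a cubic graph on ten vertices built from two copies of the (unique) $5$-vertex graph with degree sequence $(3,3,3,3,2)$ joined by a bridge between the degree-two vertices, and then applies Theorem~\ref{hint 1 for rna number 1}. The only cosmetic difference is that the paper displays an explicit parity labeling in its figure, whereas you let the cited theorem do that work, which is equally valid.
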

\begin{proof}
Let $\Sigma$ be the parity signed cubic graph as shown in Figure~\ref{figure of a Cubic graph of order ten}. Clearly, it is a cubic graph of order 10 and it has a cut-edge joining two graphs of same order. Thus by Theorem~\ref{hint 1 for rna number 1}, we have $\sigma^{-1}(\Sigma) = 1$. This completes the proof.
\end{proof}

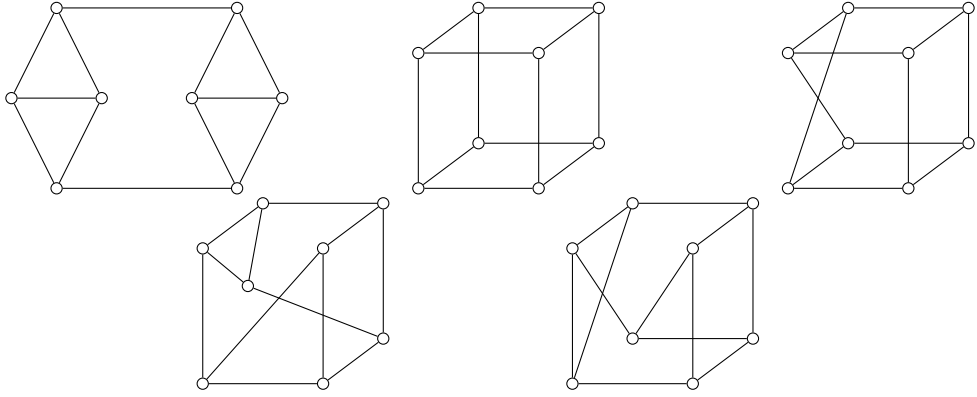
\begin{figure}[ht]
\hspace*{0.3in}
\begin{subfigure}{0.30\textwidth}
\begin{tikzpicture}[scale=0.40]
\node[vertex] (v1) at (9,2) {};
\node[vertex] (v2) at (7.5,5) {};
\node[vertex] (v3) at (10.5,5) {};
\node[vertex] (v4) at (9,8) {};
\node[vertex] (v5) at (15,2) {};
\node[vertex] (v6) at (13.5,5) {};
\node[vertex] (v7) at (16.5,5) {};
\node[vertex] (v8) at (15,8) {};

\foreach \from/\to in {v1/v2,v1/v5,v1/v3,v2/v4,v2/v3,v3/v4,v4/v8,v5/v6,v5/v7,v6/v7,v6/v8,v7/v8} \draw (\from) -- (\to);

\end{tikzpicture}
\end{subfigure}
\hspace*{0.15in}
\begin{subfigure}{0.30\textwidth}
\begin{tikzpicture}[scale=0.40]
\node[vertex] (v1) at (9,2) {};
\node[vertex] (v2) at (9,6.5) {};
\node[vertex] (v3) at (11,3.5) {};
\node[vertex] (v4) at (11,8) {};
\node[vertex] (v5) at (13,2) {};
\node[vertex] (v6) at (13,6.5) {};
\node[vertex] (v7) at (15,3.5) {};
\node[vertex] (v8) at (15,8) {};

\foreach \from/\to in {v1/v2,v1/v5,v1/v3,v2/v4,v2/v6,v3/v4,v4/v8,v5/v6,v5/v7,v3/v7,v6/v8,v7/v8} \draw (\from) -- (\to);

\end{tikzpicture}
\end{subfigure}
\hfill
\begin{subfigure}{0.30\textwidth}
\begin{tikzpicture}[scale=0.40]
\node[vertex] (v1) at (9,2) {};
\node[vertex] (v2) at (9,6.5) {};
\node[vertex] (v3) at (11,3.5) {};
\node[vertex] (v4) at (11,8) {};
\node[vertex] (v5) at (13,2) {};
\node[vertex] (v6) at (13,6.5) {};
\node[vertex] (v7) at (15,3.5) {};
\node[vertex] (v8) at (15,8) {};

\foreach \from/\to in {v1/v4,v1/v5,v1/v3,v2/v4,v2/v6,v3/v2,v4/v8,v5/v6,v5/v7,v3/v7,v6/v8,v7/v8} \draw (\from) -- (\to);

\end{tikzpicture}
\end{subfigure}
\vspace*{0.1in}
\hspace{1.3in}
\begin{subfigure}{0.30\textwidth}
\begin{tikzpicture}[scale=0.40]
\node[vertex] (v1) at (9,2) {};
\node[vertex] (v2) at (9,6.5) {};
\node[vertex] (v3) at (10.5,5.25) {};
\node[vertex] (v4) at (11,8) {};
\node[vertex] (v5) at (13,2) {};
\node[vertex] (v6) at (13,6.5) {};
\node[vertex] (v7) at (15,3.5) {};
\node[vertex] (v8) at (15,8) {};

\foreach \from/\to in {v1/v6,v1/v5,v1/v2,v2/v3,v2/v4,v3/v7,v3/v4,v4/v8,v5/v6,v5/v7,v6/v8,v7/v8} \draw (\from) -- (\to);

\end{tikzpicture} 
\end{subfigure}
\begin{subfigure}{0.30\textwidth}
\begin{tikzpicture}[scale=0.40]
\node[vertex] (v1) at (9,2) {};
\node[vertex] (v2) at (9,6.5) {};
\node[vertex] (v3) at (11,3.5) {};
\node[vertex] (v4) at (11,8) {};
\node[vertex] (v5) at (13,2) {};
\node[vertex] (v6) at (13,6.5) {};
\node[vertex] (v7) at (15,3.5) {};
\node[vertex] (v8) at (15,8) {};

\foreach \from/\to in {v1/v4,v1/v5,v1/v2,v2/v4,v2/v3,v3/v6,v4/v8,v5/v6,v5/v7,v3/v7,v6/v8,v7/v8} \draw (\from) -- (\to);

\end{tikzpicture}
\end{subfigure}
\caption{Non-isomorphic cubic graphs of order eight}\label{different cubic graphs on 8 vertices}
\end{figure}

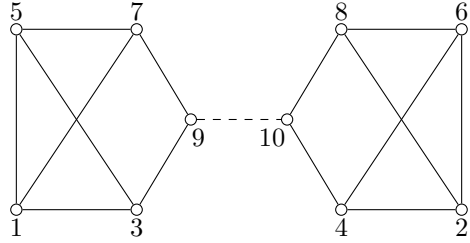
\begin{figure}[ht]
\centering
\begin{tikzpicture}[scale=0.40]
\node[vertex] (v1) at (7,2) {};
\node [below] at (7,2) {1};
\node[vertex] (v2) at (11,2) {};
\node [below] at (11,2) {3};
\node[vertex] (v3) at (7,8) {};
\node [above] at (7,8) {5};
\node[vertex] (v4) at (11,8) {};
\node [above] at (11,8) {7};
\node[vertex] (v5) at (12.8,5) {};
\node [below] [xshift=0.1cm] at (12.8,5) {9};
\node[vertex] (v6) at (16,5) {};
\node [below] [xshift=-0.2cm] at (16,5) {10};
\node[vertex] (v7) at (17.8,2) {};
\node [below] at (17.8,2) {4};
\node[vertex] (v8) at (21.8,2) {};
\node [below] at (21.8,2) {2};
\node[vertex] (v9) at (17.8,8) {};
\node [above] at (17.8,8) {8};
\node[vertex] (v10) at (21.8,8) {};
\node [above] at (21.8,8) {6};

\foreach \from/\to in {v1/v2,v1/v4,v1/v3,v2/v5,v2/v3,v3/v4,v4/v5,v6/v7,v6/v9,v7/v8,v7/v10,v8/v9,v8/v10,v9/v10} \draw (\from) -- (\to);

\foreach \from/\to in {v5/v6} \draw [dashed] (\from) -- (\to);

\end{tikzpicture}
\caption{Parity signed cubic graph of order ten with exactly one negative edge}\label{figure of a Cubic graph of order ten}
\end{figure}

\begin{theorem}\label{upper bound of 4n-1 regular graph}
The smallest order of a $(4n-1)$-regular graph having \textbf{rna} number one is bounded above by $12n-2$. Moreover, this bound is sharp for $n=1$. 
\end{theorem}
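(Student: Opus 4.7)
The proof splits naturally into two pieces, and both halves are already essentially packaged by the lemmas just preceding the theorem, so the job is mostly assembly.

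For the upper bound, my plan is to invoke Lemma~\ref{contruction of odd 4n-1 graphs with rna no one} directly: it produces a concrete $(4n-1)$-regular graph $H$ on $12n-2$ vertices that contains a cut-edge whose removal leaves two components, each of order $6n-1$. Since $(6n-1)-(6n-1)=0\le 1$, Theorem~\ref{hint 1 for rna number 1} applies and yields $\sigma^{-}(H)=1$. Thus the minimum order of a $(4n-1)$-regular graph with \textbf{rna} number one is at most $|V(H)|=12n-2$.

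For sharpness when $n=1$, the target is $12(1)-2=10$, so I need to rule out cubic graphs of order strictly less than $10$ and exhibit one of order $10$. Because a cubic graph has even order (handshaking), the only candidates below $10$ are orders $4$, $6$, and $8$, and these are exactly the cases dispatched by Lemmas~\ref{lemma 1 of cubic graphs }, \ref{lemma 2 of cubic graphs }, and \ref{lemma 3 of cubic graphs }, respectively. Lemma~\ref{cubic graph with rna 1} then produces a cubic graph on $10$ vertices with \textbf{rna} number one, matching the bound $12n-2$ for $n=1$.

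No step here should be hard given the preparation: the upper bound is one-line, and the sharpness at $n=1$ is the conjunction of four previously-proved lemmas. The only place care is needed is the quantifier sentence itself — making sure to state that \emph{every} cubic graph on fewer than $10$ vertices has been excluded (which is why the enumeration of non-isomorphic cubic graphs on $4$, $6$, $8$ vertices in the earlier lemmas, combined with parity of order, really does cover all cases). With that noted, the proof is just a short invocation of the four lemmas plus Theorem~\ref{hint 1 for rna number 1}.
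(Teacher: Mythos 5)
Your proposal is correct and follows essentially the same route as the paper: the upper bound comes from the construction in Lemma~\ref{contruction of odd 4n-1 graphs with rna no one} together with Theorem~\ref{hint 1 for rna number 1}, and sharpness at $n=1$ from the order-$10$ example of Lemma~\ref{cubic graph with rna 1}. In fact you are slightly more careful than the paper's two-line proof, since you explicitly invoke Lemmas~\ref{lemma 1 of cubic graphs }--\ref{lemma 3 of cubic graphs } and the parity of the order of a cubic graph to rule out all orders below $10$.
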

\begin{proof}
The proof of the theorem follows from Lemma~\ref{contruction of odd 4n-1 graphs with rna no one} and Lemma~\ref{cubic graph with rna 1}. 
\end{proof} 

Now we construct a $(4n+1)$-regular graph on $8n+6$ vertices with a cut-edge joining two graphs of order $4n+3$ each. 

\begin{lemma}\label{contruction of odd 4n+1 graphs with rna no one}
There exists a $(4n+1)$-regular graph on $8n+6$ vertices with a cut-edge joining two graphs of order $4n+3$ each.
\end{lemma}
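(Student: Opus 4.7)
The natural approach is to mimic the construction used for Lemma~\ref{contruction of odd 4n-1 graphs with rna no one}: first build an auxiliary graph $H$ of order $4n+3$ whose vertices all have degree $4n+1$ except for a single distinguished vertex of degree $4n$, then take two disjoint copies $H^{(1)}, H^{(2)}$ of $H$ and join their distinguished vertices by a new edge $e$. The resulting graph will then be $(4n+1)$-regular of order $2(4n+3) = 8n+6$, with $e$ as a cut-edge whose removal splits it into two components of order $4n+3$.

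The key step is the construction of $H$. I would start from $K_{4n+3}$ on $\{v_0, v_1, \ldots, v_{4n+2}\}$, which is $(4n+2)$-regular, and remove a judiciously chosen edge set so as to drop $\deg(v_0)$ by two and every other degree by exactly one. Concretely, delete the two edges $v_0 v_1$ and $v_0 v_2$ (this lowers $\deg(v_0)$ from $4n+2$ to $4n$ and lowers each of $\deg(v_1), \deg(v_2)$ by one, to $4n+1$), and then delete the perfect matching
\[
\{v_3 v_4,\; v_5 v_6,\; \ldots,\; v_{4n+1} v_{4n+2}\}
\]
on the remaining $4n$ vertices, which lowers each of their degrees by one to $4n+1$. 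The parity condition $2 \mid 4n$ ensures this matching exists. A routine degree check then confirms that $H$ has the advertised degree sequence.

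Once $H$ is in hand, taking two disjoint copies $H^{(1)}, H^{(2)}$ and adjoining the edge $v_0^{(1)} v_0^{(2)}$ bumps the degrees of the two distinguished vertices up to $4n+1$, making the resulting graph $(4n+1)$-regular of order $8n+6$. Since $v_0^{(1)} v_0^{(2)}$ is the only edge between $H^{(1)}$ and $H^{(2)}$, it is a cut-edge whose removal separates the graph into two subgraphs of order $4n+3$ each, as required.

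I foresee no real obstacle: the construction is essentially a direct analogue of the previous one, differing only in the degree arithmetic, and the only prerequisite is the parity $2 \mid 4n$ needed for the matching on $\{v_3, \ldots, v_{4n+2}\}$, which is immediate. If one prefers a cycle-power flavour consistent with Lemma~\ref{contruction of odd 4n-1 graphs with rna no one}, the same $H$ can instead be realised as $C_{4n+3}^{2n}$ together with a near-perfect matching drawn from its (unique) cycle of distance-$(2n+1)$ chords, but the $K_{4n+3}$-minus-matching description above is the most transparent.
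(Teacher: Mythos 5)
Your proposal is correct and follows essentially the same strategy as the paper: construct an auxiliary graph on $4n+3$ vertices in which one distinguished vertex has degree $4n$ and all others have degree $4n+1$, then join two disjoint copies at the distinguished vertices. The paper realizes the auxiliary graph as $C_{4n+3}^{2n}$ plus a near-perfect matching of distance-$(2n+1)$ chords, which is the same graph (up to relabeling) as your $K_{4n+3}$ minus two edges at $v_0$ minus a perfect matching on the remaining $4n$ vertices, so the difference is purely presentational.
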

\begin{proof}
Let $C_{4n+3}$ be the cycle with $V(C_{4n+3}) = \{v_{0},v_{1},v_{2},...,v_{4n+2}\}$ and $E(C_{4n+3}) = \{v_{i}v_{i+1}~|~ 0 \leq i \leq 4n+2\}$, where subscripts are read modulo $4n+3$. Consider the power graph $C_{4n+3}^{2n}$ of $C_{4n+3}$. Note that the degree of each vertex in $C_{4n+3}^{2n}$ is $4n$. Now for each $1 \leq i \leq 2n+1$, insert an edge between $v_{i}$ and $v_{i+(2n+1)}$ in $C_{4n+3}^{2n}$, and denote the graph so obtained by $G_{s}$. Clearly, the order of $G_{s}$ is $4n+3$ and the degree of $v_{0}$ is $4n$, whereas the degree of all other vertices in $G_{s}$ is $4n+1$. Now take two disjoint copies of $G_s$ and join their $v_{0}$-vertices by an edge. This resulting graph is the required graph.
\end{proof}

Observe that both sides of the graph constructed in Lemma~\ref{contruction of odd 4n+1 graphs with rna no one} are of order $4n+3$. Also, except one vertex, each vertex of these sides have degree $4n+1$. These sides are the smallest such sides of same order whose joining by a cut-edge produce a $(4n+1)$-regular graph. Hence this graph is the $(4n+1)$-regular graph of smallest order having \textbf{rna} number one. Thus the following theorem is immediate from Lemma~\ref{contruction of odd 4n+1 graphs with rna no one}.

\begin{theorem}\label{upper bound of 4n+1 regular graph}
The minimum order of $(4n+1)$-regular graphs having \textbf{rna} number one is $8n+6$. 
\end{theorem}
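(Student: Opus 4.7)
The plan is to establish the theorem as a two-sided bound. The upper bound $8n+6$ is immediate: Lemma~\ref{contruction of odd 4n+1 graphs with rna no one} produces a $(4n+1)$-regular graph on exactly $8n+6$ vertices containing a cut-edge whose two sides have the same order $4n+3$, and Theorem~\ref{hint 1 for rna number 1} then guarantees that this graph has \textbf{rna} number one. So the work concentrates on the matching lower bound.

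For the lower bound, suppose $G$ is a $(4n+1)$-regular graph with $\sigma^{-}(G) = 1$. By Theorem~\ref{hint 1 for rna number 1}, $G$ contains a cut-edge $e = xy$ whose removal splits $G$ into two components $G_1$ and $G_2$ of orders $m_1$ and $m_2$ with $|m_1 - m_2| \leq 1$. Inside $G_i$, the endpoint of $e$ has degree $4n$ while every other vertex has degree $4n+1$. My first step is to apply the handshaking lemma in each component:
\[
\sum_{v \in V(G_i)} \deg_{G_i}(v) = 4n + (m_i - 1)(4n+1) = (4n+1)m_i - 1,
\]
which must be even. Since $4n+1$ is odd, this forces $m_i$ to be odd for both $i = 1, 2$.

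Next I would combine the parity constraint with $|m_1 - m_2| \leq 1$: two odd integers differing by at most one must be equal, so $m_1 = m_2 =: m$. Since $G_i$ contains a vertex of degree $4n+1$, it must have at least $4n+2$ vertices; combined with $m$ being odd, this yields $m \geq 4n+3$. Therefore $|V(G)| = 2m \geq 8n+6$, which together with the construction above proves equality.

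I do not foresee a real obstacle here — the argument is essentially a parity count once Theorem~\ref{hint 1 for rna number 1} is invoked. The only subtle point is the step that forces $m_1 = m_2$ rather than $m_1 = m_2 \pm 1$, which is where the oddness of $4n+1$ (and hence of each $m_i$) is used in an essential way; this is precisely why the $(4n+1)$-regular case behaves more rigidly than the $(4n-1)$-regular case treated in Theorem~\ref{upper bound of 4n-1 regular graph}, where the analogous bound was only shown to be an upper bound.
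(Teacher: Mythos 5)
Your proof is correct and follows the same overall route as the paper: the construction of Lemma~\ref{contruction of odd 4n+1 graphs with rna no one} together with Theorem~\ref{hint 1 for rna number 1} gives the upper bound, and a minimality argument on the two sides of the cut-edge gives the lower bound. In fact your lower-bound argument is more complete than the paper's, which only asserts informally that the two sides of order $4n+3$ are ``the smallest such sides of same order''; your handshaking computation $(4n+1)m_i-1\equiv 0 \pmod 2$ forcing both $m_i$ odd, hence $m_1=m_2\geq 4n+3$, is exactly the justification the paper leaves implicit.
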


\section{Time complexity of computing \textbf{rna} number}\label{section of time complexity}
For basic terminologies related to algorithm and its time complexity, we refer the reader to~\cite{Karumanchi}. Recall that the edge-connectivity $\kappa'(G)$ of a graph $G$ is the size of minimum cut. For any graph $G$ of order $n$ and size $m$, the best time bound for edge-connectivity is $O(m+\min \{\kappa'(G) n^{2}, mn+n^{2} log(n)\})$ due to Nagamochi and Ibaraki~\cite{Nagamochi}. Hence the edge-connectivity $\kappa'(G)$ of a graph $G$ can be computed in polynomial time.

For a given graph $G$ of order $n$, we define a family $\mathcal{A}$ of subsets of $V(G)$ as follows
\begin{equation}\label{Equ of family A}
\mathcal{A} = \{A \subset V(G)~~:~~ |A|= \big\lfloor \frac{n}{2} \big\rfloor\}.
\end{equation}

Clearly the cardinality of $\mathcal{A}$ is $C(n, \lfloor \frac{n}{2} \rfloor)$. Let the collection $\mathcal{B}$ be defined by
\begin{equation}\label{Equ of family B}
\mathcal{B} = \{|[A:A^c]| ~~:~~ A \in \mathcal{A}\}.
\end{equation}

If $|\mathcal{B}|$ denotes the cardinality of $\mathcal{B}$ counting the multiplicities of its elements then $|\mathcal{B}|=|\mathcal{A}|$.

\vspace{0.8cm}
\hrule
\vspace{0.2cm}
\noindent
\textbf{Algorithm 1:} The \textbf{rna} number $\sigma^{-}(G)$
\vspace{0.2cm}
\hrule
\vspace{0.2cm}
\noindent
\textbf{Input:} A graph $G$ of order $n$.\\
\textbf{Output:} The \textbf{rna} number $\sigma^{-}(G)$ of $G$\\
  1: obtain the family $\mathcal{A}$ \\
  2: \textbf{for} $A \in \mathcal{A}$ \textbf{do} \\
  3:    \hspace{1cm}    compute $|[A:A^c]|$ \\
  4: \textbf{end for}\\
  5: obtain the family $\mathcal{B}$ by the numbers obtained in step 3\\
  6: find the smallest number in $\mathcal{B}$ and denote it by $\sigma^{-}(G)$\\
  7: return $\sigma^{-}(G)$.
\vspace{0.2cm}
   \hrule
 
 \vspace{0.2cm}
\begin{lemma}
The number returned by Algorithm 1 is the \textbf{rna} number of $G$.
\end{lemma}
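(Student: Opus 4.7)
The plan is to reduce the correctness of Algorithm 1 to the cut-characterization of $\sigma^{-}(G)$ that is recorded in the paragraph immediately after Definition~3, namely: for a graph of order $n$, $\sigma^{-}(G)$ equals the minimum of $|[A:A^{c}]|$ taken over all $A\subset V(G)$ whose two sides are nearly equal, i.e.\ with $|A|=|A^{c}|=n/2$ when $n$ is even and with $\bigl||A|-|A^{c}|\bigr|=1$ when $n$ is odd. This equivalence holds because any bijection $f:V(G)\to\{1,2,\dots,n\}$ partitions $V(G)$ into its odd-labeled and its even-labeled vertices, whose cardinalities are $\lceil n/2\rceil$ and $\lfloor n/2\rfloor$, and the negative edges of $\Sigma_{f}$ are precisely the edges across this partition; conversely, every nearly-equal bipartition arises from some such bijection.

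With this recalled, I would next verify that the family $\mathcal{A}$ in~\eqref{Equ of family A} indexes exactly the nearly-equal bipartitions of $V(G)$. If $n$ is even then $\lfloor n/2\rfloor=n/2$, so $A\in\mathcal{A}$ forces $|A|=|A^{c}|=n/2$, and both $A$ and $A^{c}$ lie in $\mathcal{A}$, meaning each nearly-equal bipartition is represented twice. If $n$ is odd then $|A|=\lfloor n/2\rfloor=(n-1)/2$ and $|A^{c}|=(n+1)/2$, so $\bigl||A|-|A^{c}|\bigr|=1$, and exactly one of $A,A^{c}$ lies in $\mathcal{A}$. In either parity, $\mathcal{A}$ contains at least one representative of every nearly-equal bipartition of $V(G)$, and only such representatives.

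Because $|[A:A^{c}]|=|[A^{c}:A]|$, the multiset $\mathcal{B}$ in~\eqref{Equ of family B} consists precisely of the sizes of the cuts induced by nearly-equal bipartitions (each counted once or twice according to parity). In particular the minimum element of $\mathcal{B}$ equals the minimum size of a nearly-equal cut in $G$, which by the characterization above is $\sigma^{-}(G)$. Since step 6 outputs exactly $\min\mathcal{B}$ and step 7 returns this value, the number returned by the algorithm is $\sigma^{-}(G)$.

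There is no genuine obstacle here; the argument is essentially a verification that $\mathcal{A}$ enumerates the right collection of subsets. The only point requiring mild care is the parity bookkeeping, in particular the fact that each nearly-equal bipartition may be counted twice in $\mathcal{A}$ when $n$ is even, which is harmless for a minimization.
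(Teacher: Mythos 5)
Your argument is correct and follows essentially the same route as the paper's proof, which simply observes that $\mathcal{A}$ enumerates the nearly-equal bipartitions so that $\min\mathcal{B}$ is the minimum nearly-equal cut size, i.e.\ $\sigma^{-}(G)$. Your version merely adds the (harmless and accurate) parity bookkeeping about each even-order bipartition being represented twice in $\mathcal{A}$.
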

\begin{proof}
Clearly each $A \in \mathcal{A}$ generates a cut $[A:A^c]$ whose sides differ by at most one. Thus $\mathcal{B}$ is the collection of sizes of all possible cuts in $G$ whose sides are nearly equal. Consequently, the smallest element of $\mathcal{B}$ will be the \textbf{rna} number of $G$. Thus the lemma follows.
\end{proof}

\vspace{0.2cm}  
\begin{theorem}\label{time complexity}
The running time of Algorithm 1 is $O(2^n+ n^{\lfloor \frac{n}{2} \rfloor}.)$.
\end{theorem}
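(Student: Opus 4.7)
My plan is to bound the cost of each of the seven steps of Algorithm 1 separately and then sum. The two summands in the claimed bound correspond to two distinct sources of work: the brute-force enumeration of subsets of $V(G)$ in Step 1, and the processing of the family $\mathcal{A}$ together with cut-size computation in Steps 2--6.

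First I would analyze Step 1. The natural way to build $\mathcal{A}$ as defined in (\ref{Equ of family A}) is to iterate over all $2^n$ subsets of $V(G)$ (for instance, through the binary strings of length $n$), retaining exactly those of size $\lfloor n/2 \rfloor$. This contributes $O(2^n)$ to the running time and is the source of the first summand.

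Next I would treat Steps 2--6. The key cardinality estimate is
\[
|\mathcal{A}| \;=\; \binom{n}{\lfloor n/2 \rfloor} \;\le\; n^{\lfloor n/2 \rfloor},
\]
using the elementary bound $\binom{n}{k}\le n^{k}$. For each $A \in \mathcal{A}$, one can compute $|[A:A^{c}]|$ by a single pass over the (at most $\binom{n}{2} = O(n^{2})$) edges of $G$, counting those with exactly one endpoint in $A$. Steps 5 and 6 each make one sweep of $\mathcal{B}$ to assemble it and to pick the smallest entry, at cost $O(|\mathcal{B}|) = O(|\mathcal{A}|)$, which is dominated by the loop cost. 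Hence Steps 2--6 together run in time $O\bigl(n^{\lfloor n/2 \rfloor}\cdot n^{2}\bigr)$.

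The only subtle point, and what I expect to be the main obstacle, is justifying that this product can be absorbed into the stated $n^{\lfloor n/2 \rfloor}$ term rather than appearing as $n^{\lfloor n/2 \rfloor+2}$. This I would handle by writing
\[
\binom{n}{\lfloor n/2 \rfloor} \;=\; \frac{n(n-1)\cdots (n-\lfloor n/2 \rfloor +1)}{\lfloor n/2 \rfloor !},
\]
and noting that for all sufficiently large $n$ the factorial $\lfloor n/2 \rfloor!$ grows faster than $n^{2}$, so that $\binom{n}{\lfloor n/2 \rfloor}\cdot n^{2} = O\bigl(n^{\lfloor n/2 \rfloor}\bigr)$. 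Combining this with the Step 1 contribution $O(2^{n})$ yields the stated running time $O\bigl(2^{n}+n^{\lfloor n/2 \rfloor}\bigr)$.
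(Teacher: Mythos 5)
Your proof is correct and follows essentially the same step-by-step decomposition as the paper: $O(2^n)$ for generating $\mathcal{A}$, and a bound of $\binom{n}{\lfloor n/2\rfloor}\le n^{\lfloor n/2\rfloor}$ iterations for the loop and the minimum search. In fact you are slightly more careful than the paper, which silently treats each computation of $|[A:A^c]|$ as free, whereas you explicitly account for the $O(n^2)$ per-subset cost and show it is absorbed because $\lfloor n/2\rfloor!$ eventually dominates $n^2$ in $\binom{n}{\lfloor n/2\rfloor}\cdot n^{2}\le n^{\lfloor n/2\rfloor}\cdot n^{2}/\lfloor n/2\rfloor!$.
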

\begin{proof}
It is well known that, using the inclusion-exclusion principle, all possible subsets of a given size of a set of $n$ elements can be computed in $O(2^n)$ time. Thus for step 1, we spend $O(2^n)$ time. Since $|\mathcal{A}| = C(n, \lfloor \frac{n}{2} \rfloor)$, step 2 to step 4 can be done in $O(n^{\lfloor \frac{n}{2} \rfloor})$ time. Clearly step 5 can be done in constant time.

It is also known that the smallest element in a set of $n$ numbers can be computed in $O(n)$ time. Thus step 6 takes $O(n^{\lfloor \frac{n}{2} \rfloor})$ time, because $|\mathcal{B}|= C(n, \lfloor \frac{n}{2} \rfloor)$. Hence the overall running time of Algorithm 1 is $O(2^n+ n^{\lfloor \frac{n}{2} \rfloor})$. This completes the proof.
\end{proof}

\section{Concluding Remarks}\label{section of conclusion}
We have determined the \textbf{rna} number of $P(n,k)$ for $k=1,2$. For $k \geq 3$, the distribution of odd and even integers to the vertices of $P(n,k)$ to obtain the exact value of $\sigma^{-}(P(n,k))$ seems to be hard. Thus it would be nice if one can solve the following problem.

\begin{problem}\label{prob for the rna no of remaining gen Petersen graphs}
Let $n \geq 7$ and $k \geq 3$ be any given positive integers. What is the value of $\sigma^{-}(P(n,k))$?
\end{problem}

We have proved that the minimum order of a $(4n+1)$-regular graph having \textbf{rna} number one is $8n+3$. We have also proved that the minimum order of a $(4n-1)$-regular graph having \textbf{rna} number one is bounded above by $12n-2$. We could prove the sharpness of this bound only for $n=1$. For $n\geq2$, it is not known if this bound is sharp. From these, we also see that the best possible lower bound for the \textbf{rna} number of odd regular graphs is 1. For each odd $k\geq 3$, best possible upper bound for the \textbf{rna} number of $k$-regular graphs is unknown. Hence the following problems are also interesting.

\begin{problem}\label{prob sharpness}
For $n\geq 2$, what is the minimum order of a $(4n-1)$-regular graph having \textbf{rna} number one?
\end{problem}

\begin{problem}\label{prob for upper bound of the rna no of cubic graphs}
What is the best possible upper bound for the \textbf{rna} number of odd regular graphs?
\end{problem}

We have proposed an exponential time algorithm to find the \textbf{rna} number of a graph $G$. However, there is a minor difference between the concept of the edge-connectivity and the \textbf{rna} number of a graph. So, we propose the following conjecture:

\begin{conjecture}\label{conjecture 1}
The \textbf{rna} number of a graph can be computed in polynomial time. 
\end{conjecture}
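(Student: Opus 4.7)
The conjecture asserts that, for every simple connected graph $G$, the \textbf{rna} number $\sigma^{-}(G)$ --- equivalently, the size of a minimum edge cut whose sides differ in cardinality by at most one --- is computable in time polynomial in $n = |V(G)|$. The first plan is to exploit submodularity of the cut function $f(A) = |[A : A^{c}]|$: symmetric submodular minimization is already polynomial, so the task reduces to imposing the cardinality side-constraint $|A| \in \{\lfloor n/2 \rfloor, \lceil n/2 \rceil\}$ without destroying tractability. Two concrete routes are (i) a parametric max-flow setup in which a Lagrange multiplier penalizes the imbalance $||A| - |A^{c}||$, combined with a monotonicity argument for the resulting family of parametric cuts, and (ii) constructing a Gomory--Hu tree of $G$ and attempting to show that some single tree edge always induces a cut of the required balance, so that an optimum can be read off by inspecting the $n-1$ fundamental cuts.

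A complementary angle is to write $\sigma^{-}(G)$ as a $0/1$ integer program and look for a polynomial-time rounding of a natural LP or SDP relaxation, for instance the metric-polytope relaxation or a Goemans--Williamson-style eigenvalue bound. For graphs with extra structure one could also design dynamic programming on a tree decomposition, or, in the vertex-transitive case, exploit symmetries to reduce the search of candidate bipartitions to a polynomial family; in either case the key invariant to maintain is the two-sided balance condition.

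The decisive obstacle, which is where I expect any honest attempt to founder, is that on graphs of even order $\sigma^{-}(G)$ is literally the minimum bisection width, and \textsc{Minimum Bisection} is a classical NP-complete problem. Hence an unconditional proof of Conjecture~\ref{conjecture 1} in the stated generality would force $\mathrm{P} = \mathrm{NP}$, which is beyond current reach. A defensible and realistic target, which is where I would actually concentrate effort, is to restrict the conjecture to structured families: planar graphs, graphs of bounded tree-width, vertex-transitive graphs, or the generalized Petersen family $P(n,k)$ central to this paper (where the circular automorphisms reduce the number of essentially distinct balanced cuts to $O(n^{2})$, each evaluable in $O(n)$ time). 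Proving the conjecture as literally stated appears to require a fundamentally new insight separating the \textbf{rna} number from ordinary minimum bisection.
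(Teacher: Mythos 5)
This statement is a conjecture; the paper offers no proof of it, so there is nothing to compare your attempt against on the paper's side. What you have written is not a proof but a (correct) diagnosis of why no proof should be expected: for a graph $G$ of even order, $\sigma^{-}(G)$ is by definition the minimum number of edges in a cut with two equal sides, i.e.\ exactly the minimum bisection width, and \textsc{Minimum Bisection} is NP-hard. Hence Conjecture~\ref{conjecture 1}, as stated for all simple connected graphs, would imply $\mathrm{P}=\mathrm{NP}$. This is the decisive point, and you identify it correctly; the preliminary routes you sketch (submodular minimization with a cardinality constraint, parametric max-flow, Gomory--Hu fundamental cuts, LP/SDP relaxations) are all known to break on precisely this obstruction --- the balance constraint destroys the tractability of symmetric submodular minimization, and no fundamental cut of a Gomory--Hu tree need be balanced.

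The one thing to tighten: your observation is not merely an obstacle to one proof strategy but a conditional refutation of the conjecture itself, and it deserves to be stated as such (the conjecture holds only if $\mathrm{P}=\mathrm{NP}$). Your suggested retreat to structured families is the right move; in particular, for the generalized Petersen graphs $P(n,k)$ studied in this paper the candidate balanced cuts can plausibly be enumerated in polynomial time, and for bounded-treewidth or planar graphs minimum bisection is known to be tractable or well-approximable. If you want to turn this into a contribution rather than a critique, the cleanest statement is: restricted to any graph class on which minimum bisection (and its odd-order analogue) is polynomial, the \textbf{rna} number is polynomial-time computable, and the general conjecture should be replaced by the question of identifying such classes.
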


\end{document}